\documentclass[12pt]{article}

\usepackage[showframe=false]{geometry}                
\usepackage{graphicx}                
\usepackage[T1]{fontenc}             
\usepackage{tgpagella}               
\usepackage[symbol]{footmisc}        
\usepackage{hyperref}                
\usepackage{amsmath}                 
\usepackage{amssymb}
\usepackage{amsthm}                  
\usepackage{rotating}                
\usepackage{multirow}                
\usepackage{booktabs}                
\usepackage{adjustbox}               
\usepackage{mathtools}
\usepackage{xcolor}                  
\usepackage{colortbl}                
\usepackage{soul}
\usepackage{changepage}              
\usepackage[backend=biber,giveninits=false,uniquename=false, maxnames=99, uniquelist=false]{biblatex} 
\usepackage{caption}
\usepackage{datetime}                
\usepackage{authblk}                 

\newcommand{\nint}[1]{\left\|#1\right\|}      

\newcommand{\nexteq}{\displaybreak[0]\\ &=}

\newcommand{\refby}[1]{&&\text{(by (\ref{#1}))}}

\newcommand{\N}{\mathbb{N}}
\newcommand{\Z}{\mathbb{Z}}

\newcommand{\C}{\mathbb{C}}

\newcommand{\z}{\zeta}
\newcommand{\ph}{\varphi}

\DeclareMathOperator{\re}{Re}
\DeclareMathOperator{\im}{Im}

\newtheorem{thm}{Theorem}

\newtheorem{lem}{Lemma}
\newtheorem{prop}[lem]{Proposition}
\theoremstyle{definition}

\definecolor{cl0}{HTML}{cccccc}
\definecolor{cl1}{HTML}{62a0ea}
\definecolor{cl2}{HTML}{79c600}

\newdateformat{monthyeardate}{%
  \monthname[\THEMONTH] \THEYEAR}

\title{The Minimal Absolute Value of Sums of Fifth Roots of Unity}
\author[1,2]{Akihiro Munemasa}
\author[1,3]{Guillermo Núñez Ponasso}

\affil[1]{\footnotesize Graduate School of Information Sciences, Tohoku University, Sendai, Miyagi, Japan}
\affil[2]{\footnotesize
School of Science, China University of Geosciences, Beijing, China
}
\affil[3]{\footnotesize Dept. of Electrical \& Computer Engineering, Worcester Polytechnic Institute, Worcester, MA, USA}

\date{July 1, 2026}
\begin{document}

\maketitle

\abstract{
We determine the minimal absolute value of a non-vanishing sum of $n$ fifth roots of unity chosen with repetition, and characterize the corresponding sums. As a function of $n$, the minimal absolute value is monotone non-increasing over congruence classes of $n$ modulo $5$ and its only jumps occur when $n=5F_m$, $n=L_m$, or $n=2L_m$, where $F_m$ and $L_m$ denote the $m$-th Fibonacci and Lucas numbers respectively. To prove our results we reduce the problem to a series of inequalities involving rational approximations of the golden ratio $\ph=(1+\sqrt{5})/2$, the solutions of which can be characterized using the theory of continued fractions.
}

\section{Introduction}\label{sec:intro}
Let $\ell>1$ be an integer, and $\z_{\ell}$ be a primitive $\ell$-th root of unity. Let $\N=\{0,1,2,\dots\}$ be the set of non-negative integers. A weight $n$ sum of $\ell$-th roots of unity is an element of the set
\[\Sigma_{\ell}(n)=\left\{a_0+a_1\z_{\ell}+\dots+a_{\ell-1}\zeta_{\ell}^{\ell-1}:a_i\in\N,\ \sum_{i=0}^{\ell-1}a_i=n\right\},\]
We consider the problem of determining the minimal absolute value of a sum of $\ell$-th roots with weight $n$. More explicitly, the determination, for every integer $n\geq 1$, of the value
\[\sigma_{\ell}(n)=\min\{|s|:s\in\Sigma_{\ell}(n)\setminus\{0\}\}.\]
Note that the set $\Sigma_{\ell}(n)$ is finite, so the minimum is well-defined. Furthermore, the minimum $\sigma_{\ell}(n)$ does not depend on the choice of the primitive $\ell$-th root of unity $\z_{\ell}$. 

The minimal absolute value of a sum of $\ell$-th roots of unity (allowing vanishing sums) appears in a determinantal upper bound for matrices with entries in $\mu_{\ell}$ \cite{NP23, NP25}. Hence, the calculation of $\sigma_{\ell}(n)$ for all $n$ and a fixed $\ell$ is useful to the study of maximal determinant matrices.

Although determining $\sigma_{\ell}(n)$ for fixed $\ell$ and $n$ arbitrary is a natural question, the problem had not been previously considered in this form. A similar problem which received the attention of several authors \cite{B23, D18, KL00, M86} consists of finding asymptotic formulas for $\sigma_{\ell}(n)$ for $n$ fixed and $\ell$ arbitrarily large. This was first considered by Myerson \cite{M86}, and the counterpart to our results ~\textemdash~ with $n=5$ now fixed, and $\ell$ arbitrarily large ~\textemdash~ is given in \cite{B23}. An important related result is the proof by Lam and Leung \cite{LL00} that the set of weights $n$ for which $0\in\Sigma_{\ell}(n)$ is $ p_1\N + p_2\N +\dots + p_r\N$, where $\ell=p_1^{e_1}p_2^{e_2}\dots p_{r}^{e_r}$ is the factorization of $\ell$ into primes.

The minimal polynomial of $\z_{\ell}$ is linear or quadratic if and only if $\ell\in\{2,3,4,6\}$, and in all these cases it is easy to determine $\sigma_{\ell}(n)$ for all $n$.
\begin{prop}The value $\sigma_{\ell}(n)$ for $\ell\in\{2,3,4,6\}$ is given by the following formulas
\[\sigma_{2}(n)=\begin{cases}
    1 & \text{ if } 2\nmid n,\\
    2 & \text{ if } 2\mid n,
    \end{cases}\ \ \quad
\sigma_3(n)=\begin{cases}
    1 & \text{ if } 3\nmid n,\\
    \sqrt{3} & \text{ if } 3\mid n,
    \end{cases}\ \  \quad
\sigma_4(n)=\begin{cases}
    1 & \text{ if } 2\nmid n,\\
    \sqrt{2} & \text{ if } 2\mid n,
    \end{cases}\ \ 
\]
and $\sigma_6(n)=1$, for all $n\geq 1$.
\end{prop}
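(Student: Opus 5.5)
The plan is to work in each case inside the ring $\Z[\z_\ell]$, which for $\ell\in\{2,3,4,6\}$ is $\Z$, the Eisenstein integers $\Z[\omega]$, or the Gaussian integers $\Z[i]$. In these rings the squared absolute value is a positive integer on nonzero elements. So $\sigma_\ell(n)^2$ is the least value of the norm form attained by a nonzero element of $\Sigma_\ell(n)$. For each $\ell$ I will do two things: describe $\Sigma_\ell(n)$ as a subset of the lattice, together with the congruence condition that $n$ imposes, and then exhibit an element of the claimed norm.

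\emph{The case $\ell=2$.} Here $\Sigma_2(n)=\{a_0-a_1 : a_0+a_1=n\}$. This is exactly the set of integers in $[-n,n]$ with the same parity as $n$. If $n$ is odd the minimum is $1$, attained at $a_0=(n+1)/2$. If $n$ is even the nonzero values are even, so the minimum is $2$, attained at $a_0=n/2+1$, which is valid since $n\ge 2$.

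\emph{The case $\ell=4$.} A sum is $s=(a_0-a_2)+(a_1-a_3)i$, so $|s|^2=x^2+y^2$ with $x+y\equiv a_0+a_1+a_2+a_3=n \pmod 2$. If $n$ is odd, then $s=1$ is attained by taking $a_0=(n+1)/2$ and $a_2=(n-1)/2$, so $\sigma_4(n)=1$. If $n$ is even, then $x\equiv y \pmod 2$, and the nonzero values of $x^2+y^2$ are at least $2$. The value $s=1+i$ is attained by $a_0=a_1=1$ together with $a_2=a_3=(n-2)/2$, which works for every even $n\ge2$.

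\emph{The case $\ell=3$.} Write $s=a_0+a_1\omega+a_2\omega^2$. Using $1+\omega+\omega^2=0$, we may subtract $a_2$ from every coefficient and get $s=x+y\omega$ with $x=a_0-a_2$ and $y=a_1-a_2$. Then $|s|^2=x^2-xy+y^2$, and $x+y\equiv a_0+a_1-2a_2\equiv n \pmod 3$. The key observation is that $x^2-xy+y^2\equiv (x+y)^2 \pmod 3$. So if $3\mid n$, every nonzero norm is divisible by $3$ and hence at least $3$. The value $3$ is attained by $1-\omega$, that is $a=(k+1,k,k)$, together with a shift by multiples of $1+\omega+\omega^2$; this has weight $\equiv1$. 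For weight divisible by $3$ use $2+\omega=1-\omega^2$ instead, i.e.\ $a=(k+2,k+1,k)$ with $n=3k+3$, which has norm $4-2+1=3$. If $3\nmid n$, the value $1$ is attained by $(k+1,k,k)$ when $n=3k+1$ and by $(k+1,k+1,k)$ when $n=3k+2$; the latter sum equals $1+\omega=-\omega^2$, which has absolute value $1$.

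\emph{The case $\ell=6$.} Take $\z_6=-\omega^2$. Its powers are the six units $\pm1,\pm\omega,\pm\omega^2$. For every $n\ge1$ we need a weight-$n$ sum equal to a unit. If $n$ is odd, take $1$ plus $(n-1)/2$ pairs $\{1,-1\}$. If $n$ is even, use $\z_6^0+\z_6^2=1+\omega=-\omega^2$, which is a unit, together with $(n-2)/2$ pairs $\{1,-1\}$.

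The main obstacle, modest as it is, is the lower bound for $\ell=3$. It rests on the congruence $x^2-xy+y^2\equiv(x+y)^2\pmod 3$, which is equivalent to $3\mid xy$ or ... more precisely $(x+y)^2-(x^2-xy+y^2)=3xy$. The rest consists of explicit constructions, each checked against the weight constraint and nonnegativity of the coefficients.
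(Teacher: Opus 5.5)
Your overall strategy is sound, but the explicit witness for $\ell=4$ with $n$ even is wrong as written. With $a_0=a_1=1$ and $a_2=a_3=(n-2)/2=:p$ you get $s=(1-p)+(1-p)i=\frac{4-n}{2}(1+i)$, not $1+i$:
\begin{itemize}
\item for $n=4$ this is the vanishing sum $1+i+i^2+i^3=0$;
\item for $n=8$ it has modulus $2\sqrt2$;
\item it has modulus $\sqrt2$ only for $n\in\{2,6\}$.
\end{itemize}
So the claim ``works for every even $n\ge 2$'' fails, and the upper bound $\sigma_4(n)\le\sqrt2$ is not established. The padding must consist of vanishing pairs $1+i^2$ or $i+i^3$. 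That is, it must be added equally to $a_0$ and $a_2$, or equally to $a_1$ and $a_3$. For instance, $(a_0,a_1,a_2,a_3)=(n/2,\,1,\,n/2-1,\,0)$ has weight $n$ and gives $s=1+i$ for every even $n\ge 2$.

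In the $\ell=3$ paragraph, delete the false start ``$1-\omega$, that is $a=(k+1,k,k)$'': that coefficient vector gives $s=1$, not $1-\omega$. The construction you actually use, $a=(k+2,k+1,k)$ giving $2+\omega$ of norm $3$, is correct.

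With that repair the proof is complete, and it is more careful than the paper's. The paper gets $|s|\ge 1$ from discreteness of $\Z[\z_\ell]$: a nonzero element of modulus $<1$ would have powers accumulating at $0$. This is the same fact as your integrality of $|s|^2$.

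For the divisible cases, though, the paper only notes that vanishing sums exist exactly when $2\mid n$ (resp.\ $3\mid n$). It then quotes the smallest modulus exceeding $1$ in each ring. It never explains why a weight-$n$ sum cannot have modulus $1$ in those cases, and it does not exhibit weight-$n$ sums of modulus $2$, $\sqrt3$, $\sqrt2$.

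Your congruences supply exactly that missing step. These are $x+y\equiv n\pmod 2$, and $x^2-xy+y^2=(x+y)^2-3xy$ together with $x+y\equiv n\pmod 3$. Equivalently, every weight-$n$ sum lies in the ideal $(2)$, $(1+i)$, or $(1-\omega)$ respectively. Your explicit constructions, once the $\ell=4$ one is corrected, supply attainment.
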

\begin{proof} The ring $R=\Z[\z_{\ell}]$ for $\ell\in\{2,3,4,6\}$ is a discrete subring of $\C$, so it is closed under taking powers. This means that there is no nonzero element with absolute value less than $1$. Hence, $\sigma_{\ell}(n)\geq 1$ in all cases. For $\ell=2,4$ there is a weight $n$ vanishing sum if and only if $n$ is even, and for $\ell=3$ there is a vanishing sum if and only if $3\mid n$. The corresponding formulas follow from the fact that the smallest absolute value of an element $z\in R$  with $|z|> 1$ is $2,\sqrt{3}=|1+2\z_3|,$ and $\sqrt{2}=|1+\z_4|$ for $\ell=2,3,$ and $4$ respectively. When $\ell=6$, we have trivially $\sigma_6(1)=1$, and since $|\z_6^2+\z_6^4|=1$, it follows that $\sigma_6(2)=1$. Noticing that $\z_6^3=-1$,  the sums $1+(1+\z_6^3)(n-1)/2$ for $n\geq 3$ odd, and $(\z_6^2+\z_6^4)+(1+\z_6^3)(n-2)/2$ for $n\geq 4$ even, show that $\sigma_6(n)=1$ for $n\geq 3$.
\end{proof}
In this paper we are concerned with the case $\ell=5$, i.e. we determine the minimal absolute value of a sum of $5$-th roots of unity of weight $n$. Since the minimal polynomial $\Phi_5(T)=T^4+T^3+T^2+T+1$ of $\z_{5}$ is quartic, $\ell=5$ constitutes the smallest case where the minimal polynomial of $\z_{\ell}$ has degree $>2$.

To state our main result, we introduce some notation: The Fibonacci sequence is given by the initial values $F_0:=0$, $F_1:=1$, and $F_{m}:=F_{m-1}+F_{m-2}$ for $m\geq 2$.
The $m$-th term $L_m$ of the Lucas sequence, for $m\geq 1$, is given as
\[
  L_m:=F_{m+1}+F_{m-1}=F_{m}+2F_{m-1}.
\]
Equivalently, the Lucas sequence can be defined by the linear recurrence $L_0:=2$, $L_1:=1$, and $L_{m}:=L_{m-1}+L_{m-2}$ for $m\geq 2$. Also note that $L_{m+1}+L_{m-1}=5F_{m}$ for $m\geq 1$.

We define two functions $\kappa_1(n)$ and $\kappa_2(n)$ for $n\not\equiv 0\pmod{5}$ by
\[
  \kappa_i(n):=\max \mathcal{K}_i(n),
\]
where
\[\mathcal{K}_i(n)=\{k: n\geq iL_{k} \text{ and } iL_{k}\equiv n\pmod{5}\},\]
i.e. $\kappa_1(n)$, and $\kappa_2(n)$, are the largest index of a Lucas number, or a twice Lucas number, congruent to $n$ modulo $5$ and bounded above by $n$, respectively. For example, $\kappa_1(7)=4$ because $L_4=7\leq 7$, and $\kappa_2(7)=2$ since $2L_2=6\leq 7$. Define:
\begin{equation}\label{eq:1c2}
  \kappa(n):=\max\{\kappa_1(n)-1,\kappa_2(n)\}.
\end{equation}
Extending the Lucas numbers to negative indices, if necessary, we can ensure that for all $n\geq 1$ the set $\mathcal{K}_1(n)\cup \mathcal{K}_2(n)$ is non-empty. If we let $\kappa_i(n):=-\infty$ whenever $\mathcal{K}_i(n)$ is empty, the number $\kappa(n)$ is well-defined and takes finite values for all $n\geq 1$.

We denote the golden ratio as

\[\ph:=\frac{1+\sqrt{5}}{2},\]
i.e. $\ph$ is the positive real root of the polynomial $T^2 -T-1$.

Our main results are the following:
\begin{thm}\label{thm:1a} Let $n\geq 1$ be an integer, $n\not\equiv 0\pmod{5}$, and  $\kappa$ be as in (\ref{eq:1c2}). Then
  \begin{equation}\label{eq:thm1}
    \sigma_5(n)=\frac{1}{\ph^{\kappa(n)}}.
  \end{equation}
\end{thm}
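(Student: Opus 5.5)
The plan is to prove the two inequalities $\sigma_5(n)\le \ph^{-\kappa(n)}$ and $\sigma_5(n)\ge \ph^{-\kappa(n)}$ separately. Both rest on the arithmetic of $\Z[\z_5]$ and its real subring $\Z[\ph]=\Z[\z_5+\z_5^{-1}]$.

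\emph{Basic facts.} The only $\Z$-linear relation among $1,\z_5,\dots,\z_5^4$ is $\sum_i \z_5^i=0$. Hence an element $x\in\Z[\z_5]$ has a well-defined minimal weight $w(x)$: write $x=\sum c_i\z_5^i$ with $c_i\in\Z$ and normalize so that $\min_i c_i=0$; then $w(x)=\sum_i c_i$. Moreover, $x$ has a representation of weight $n$ exactly when $n\ge w(x)$ and $n\equiv w(x)\pmod 5$, since we may pad with copies of the vanishing sum of weight $5$. So
\[
\sigma_5(n)=\min\{|x|: x\ne 0,\ w(x)\le n,\ w(x)\equiv n \pmod 5\}.
\]
This already gives monotonicity along residue classes.

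\emph{Upper bound.} We exhibit explicit elements. The key units are $1+\z_5$ and $1+\z_5^2$, of absolute values $\ph$ and $\ph^{-1}$. The Galois automorphism $\tau:\z_5\mapsto\z_5^2$ interchanges these moduli. We compute the minimal weights of suitable powers, e.g. $(1+\z_5^2)^k$ times $1$ or $(1+\z_5)$, using the recurrence $u_{k+1}=u_k(1+\z_5^2)$. The coefficient vectors follow Fibonacci recurrences, so by induction we expect:
\begin{itemize}
\item an element of modulus $\ph^{-(k-1)}$ with minimal weight $L_k$;
\item an element of modulus $\ph^{-k}$ with minimal weight $2L_k$.
\end{itemize}
The residues match $L_k$ and $2L_k$ mod $5$ (the paper's definition of $\kappa_i$). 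Combining these with the padding remark yields $\sigma_5(n)\le\ph^{-\kappa(n)}$ for $n\not\equiv0\pmod 5$.

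\emph{Lower bound.} Let $x\ne0$ have $w(x)\le n$ and $w(x)\equiv n\pmod 5\not\equiv 0$. Write $|x|^2=a+b\ph$ with $a,b\in\Z$; its Galois conjugate is $|\tau(x)|^2=a+b\bar\ph$, where $\bar\ph=-1/\ph$. The norm $N(x)=|x|^2|\tau(x)|^2$ is a positive integer. Since $n\not\equiv 0$, $x$ is not divisible by the prime $1-\z_5$, so we can pin the norm down mod $5$. Smallness of $|x|$ means $a+b\ph$ is tiny, so $-a/b$ is a very good rational approximation to $\ph$. Then $|\tau(x)|^2=a-b/\ph\approx b\sqrt5$ is large. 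Conversely, we bound the weight from below in terms of $x$ and $\tau(x)$. For example, $w(x)\ge$ a fixed linear combination of $|x|$ and $|\tau(x)|$, obtained from $\sum c_i\ge\max$ of projections of the coefficient vector. Alternatively, express $a,b$ as quadratic forms in the $c_i$ and bound them by $(\sum c_i)^2$.

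\emph{Main obstacle.} This is the hard step: turning "$w(x)\le n$" into a sharp inequality on $(a,b)$, of the form $b\le g(n)$ with $a+b\ph<\ph^{-2\kappa(n)}$, together with the congruence condition. We then invoke the theory of continued fractions: the convergents of $\ph$ are ratios of consecutive Fibonacci numbers, and best approximations are exactly these. This forces $(a,b)$ to be (up to sign) Fibonacci/Lucas pairs, so $|x|$ is essentially a unit power $\ph^{-j}$ times a small factor. Then we check case by case which $j$ are reachable with weight $\le n$ in the right residue class, matching $\kappa_1(n)-1$ and $\kappa_2(n)$.

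To make the weight bound sharp, I would first try the exact formula
\[
w(x)=\max_{0\le j\le 4}\Bigl(\textstyle\sum_i c_i-5c_j\Bigr)\Big/\,\ldots,
\]
i.e. expressing $w$ via $\sum_i c_i-5\min_i c_i$. I would then relate the $c_i$ to $x,\tau(x)$ by the inverse discrete Fourier transform,
\[
c_j-\tfrac15\textstyle\sum_i c_i=\tfrac{2}{5}\re\bigl(x\,\z_5^{-j}+\tau(x)\,\z_5^{-2j}\bigr).
\]
This gives $w(x)\ge\tfrac{2}{5}\cdot5\cdot\min_j(\dots)$, essentially $w(x)\gtrsim 2|\tau(x)|\cos(\cdot)$, so $|\tau(x)|\lesssim n/2$ up to angular corrections. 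The delicate part is the angular analysis that produces the exact Lucas thresholds $L_k$ versus $2L_k$.
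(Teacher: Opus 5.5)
\textbf{There is a genuine gap: the lower bound is not proved.} Your reduction to minimal weights is sound: $x$ occurs in weight $n$ iff $w(x)\le n$ and $w(x)\equiv n\pmod 5$. Your upper bound is also sound. Indeed $(1+\z^2)^k=\z^k\ph^{-k}$ and $(1+\z)(1+\z^2)^k=-\z^{k+3}\ph^{1-k}$ are rotations of the real units $\pm\ph^{-j}$, whose minimal weights are $L_{j+1}$ and $2L_j$ for $j\ge 0$. These are exactly the paper's sums $F_m-F_{m-1}\ph$ and $F_{m+1}(\z+\z^4)+F_{m-1}(\z^2+\z^3)$.

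The lower bound is the whole content of the theorem, and you leave the ``hard step'' open. The step you propose for it also fails as stated. Even the sharpest inequality you aim for ($|b|\lesssim g(n)$ together with $0<a+b\ph<\ph^{-2\kappa(n)}$) only gives
$|b|\,|a+b\ph|\approx N(x)/\sqrt5\lesssim 4\ph^2/\sqrt5\approx 4.7$.
This is far above Legendre's threshold $1/2$, so best-approximation properties of the convergents of $\ph$ do not force $(a,b)$ to be a Fibonacci/Lucas pair. Concretely, $11$ splits in $\mathbb{Q}(\z)$. Norm-$11$ elements times powers of $\ph^{-1}$ have arbitrarily small modulus with $|b|\,|a+b\ph|\approx 11/\sqrt5$. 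Yet such $(a,b)$ is never a multiple of a convergent pair, since $a^2+ab-b^2=11$ is not $\pm$ a square. The ``angular analysis'' meant to produce the thresholds $L_k$ versus $2L_k$ is likewise never carried out.

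\textbf{What closes it is integrality of the norm, not continued fractions.} Your Fourier formula gives exactly
$w(x)=\max_j\bigl(-2\re(x\z^{-j}+\tau(x)\z^{-2j})\bigr)$.
The five points $-\z^{-2j}$ are equally spaced on the unit circle, so one lies within angle $\pi/5$ of $\overline{\tau(x)}$. Hence $w(x)\ge\ph|\tau(x)|-2|x|$; note the uniform constant is $2\cos(\pi/5)=\ph$, not $2$.

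Now suppose $w(x)\le n$ and $|x|<\ph^{-\kappa}$, where $\kappa=\kappa(n)\ge 0$.
\begin{itemize}
\item The paper's description of the level sets of $\kappa$ gives $n\le 2L_{\kappa+2}-5$. Hence $|\tau(x)|<2\ph^{\kappa+1}$, so $N(x)=|x|^2|\tau(x)|^2<4\ph^2<11$.
\item Also $N(x)\equiv w(x)^4\equiv 1\pmod 5$, and $a^2+ab-b^2=6$ is impossible (it forces $a,b$ even). Therefore $N(x)=1$.
\item So $x=\pm\z^j\ph^{-m}$ with $m>\kappa$. Its minimal weight is $L_{m+1}$ or $2L_m$, and either case contradicts the definition of $\kappa(n)$.
\end{itemize}

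Completed this way, your approach would be a genuinely different, computer-free proof. The paper instead works in coordinates. It writes $|\re(s)|=|n-b\alpha|/(2\ph)$ with $b=a_2+a_3$ and $\alpha=\ph+2$. Ostrowski representations and the congruence mod $5$ then pin down $b$, and hence $n$. The imaginary part, an approximation problem for $\ph$ in $a_1-a_4$ and $a_3-a_2$, then pins down $s$. Finally, $n\le 36$ is checked by computer.
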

\begin{thm}\label{thm:2a} Let $k\geq 1$ be an integer, $n=5k$, and  $\nu(k) := \max\{m: F_{m}\leq k\}$. Then
  \begin{equation}\label{eq:thm2}
    \sigma_5(n)^2=\frac{\sqrt{5}}{\ph^{2\nu(k)-1}}.
  \end{equation}
\end{thm}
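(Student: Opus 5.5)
We prove Theorem~\ref{thm:2a} by combining an algebraic lower bound on $|s|^2$ coming from norms in $\Z[\ph]$ with an explicit family of sums, padded by vanishing sums. The lower-bound half (Step 3) is where the real work lies.

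\textbf{Step 1: arithmetic setup.} Let $s=\sum a_i\z_5^i\in\Sigma_5(5k)$ be nonzero. Reducing modulo the prime $\pi=1-\z_5$ (note $\z_5\equiv 1$) gives $s\equiv \sum a_i=5k\equiv 0$, so $\pi\mid s$. The element $\beta=|s|^2=s\bar s$ lies in $\Z[\ph]$, the ring of integers of $\mathbb{Q}(\sqrt5)$, and is totally positive. Its Galois conjugate is $\beta'=|\tau(s)|^2$, where $\tau:\z_5\mapsto\z_5^2$. Since $\pi\bar\pi=|1-\z_5|^2$ generates the ramified prime $(\sqrt5)$, we can write $\beta=\sqrt5\,x$ with $x\in\Z[\ph]$.

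Total positivity of $\beta$ forces $x>0$ and $x'<0$ (because $(\sqrt5)'=-\sqrt5$). From $N(x)=x x'$ being a nonzero integer we get $|xx'|\ge 1$. Hence $\beta\ge \sqrt5/|x'|$, with equality precisely when $x=\pm$ a unit. The admissible units with the right sign pattern are $x=\ph^{-(2m-1)}$, which give $\beta=\sqrt5\,\ph^{-(2m-1)}$ and $\beta'=\sqrt5\,\ph^{2m-1}$.

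\textbf{Step 2: upper bound (construction).} Using $|1+\z_5|=\ph$ and $|1+\z_5^2|=\ph^{-1}$, we build for each $m\ge1$ an explicit sum $s_m=(1-\z_5)\,u_m$, where $u_m$ is a unit of the form $\z_5^{j}(1+\z_5^2)^{a}(1+\z_5)^{-b}$, rewritten with nonnegative coefficients. We need $|s_m|^2=\sqrt5\,\ph^{-(2m-1)}$ and a representation of weight exactly $5F_m$.

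The construction proceeds by induction on $m$, checking small cases by hand. Multiplying by a suitable unit such as $1+\z_5^2$ (or $\z_5+\z_5^4=\ph^{-1}$) maps the coefficient vector of $s_{m-1}$, $s_{m-2}$ linearly. We choose the recurrence so that weights satisfy $w_m=w_{m-1}+w_{m-2}$ and $\tau(s_m)$ stays "aligned," i.e.\ the triangle inequality for $|\tau(s_m)|$ is nearly sharp.

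Adding copies of $1+\z_5+\dots+\z_5^4=0$ raises the weight by $5$ without changing $s$. This shows $\sigma_5(5k)^2\le\sqrt5\,\ph^{-(2\nu(k)-1)}$, since $F_{\nu(k)}\le k$.

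\textbf{Step 3: lower bound (main obstacle).} We must show that no sum of weight $5k$ attains $\beta<\sqrt5\,\ph^{-(2\nu-1)}$. By Step~1, such a sum would need $|x'|>\ph^{2\nu-1}$, i.e.\ $\beta'=|\tau(s)|^2>\sqrt5\,\ph^{2\nu-1}$. So the task is to bound $\beta'$ in terms of the weight.

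We use Parseval:
\[
\sum_{j=0}^{4}|s(\z_5^j)|^2=5\sum a_i^2 ,
\]
so that $2(\beta+\beta')=5\sum a_i^2-n^2$. We also use the finer fact that the minimal weight of a representation of a given $s$ equals $\sum a_i-5\min a_i$. Writing $s$ in the basis $1,\z_5,\z_5^2,\z_5^3$ (normalized so $\min a_i=0$), both $\beta$ and $\beta'$ become explicit quadratic forms. The condition "$\beta$ tiny" then says that certain integer combinations $p,q$ of the coefficients satisfy $|p-q\ph|$ small relative to $q$.

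This is a Diophantine approximation of $\ph$. By the best-approximation property of continued fraction convergents (all partial quotients are $1$, and the convergents are $F_{j+1}/F_j$), achieving $|x'|\ge\ph^{2\nu+1}$ forces coefficients of size at least $F_{\nu+1}$, hence weight at least $5F_{\nu+1}>5k$. This contradicts the choice of $\nu$.

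The delicate part is controlling the non-unit case $|N(x)|\ge 2$. There the inequality $\beta\ge\sqrt5/|x'|$ must be combined with the weight bound on $|x'|$, and one has to check that it still cannot beat the unit case. We would try first to show that $|N(x)|\ge 2$ costs at least a factor $\ph^{2}$ in $\beta$ at comparable weight. Failing that, we would fall back on the same continued-fraction inequality with an extra constant, and verify the finitely many small-$k$ exceptions directly.
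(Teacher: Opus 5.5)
Your Step~1 is correct. Step~2 never actually exhibits $s_m$, but it can be realized exactly as you describe: take $s_1=2\zeta+\zeta^2+\zeta^3+\zeta^4=\zeta-1$, $s_2=2\zeta+\zeta^3+2\zeta^4=-s_1/\ph$, and $s_m=s_{m-1}+s_{m-2}$ coefficientwise. This gives the paper's sums $2F_m\zeta+F_{m-2}\zeta^2+F_m\zeta^3+F_{m+1}\zeta^4$, of weight $5F_m$ and with $|s_m|^2=\sqrt5\,\ph^{-(2m-1)}$.

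The genuine gap is Step~3, which is a plan rather than an argument. Its one concrete claim is that large $|x'|$ ``forces coefficients of size at least $F_{\nu+1}$, hence weight at least $5F_{\nu+1}$''. This is unsupported. The minimal weight of $s$ is $w=\sum_i a_i-5\min_i a_i$, so a single large coefficient only gives $w\ge F_{\nu+1}$; the factor $5$ is the whole difficulty. Bounding $\beta'$ in terms of the weight does not suffice on its own either. The natural bound comes from $a_i=\frac15\bigl(n+2\re(s\zeta^{-i})+2\re(\tau(s)\zeta^{-2i})\bigr)$ and gives $w\ge\ph|\tau(s)|-2|s|$; the constant $\ph=2\cos(\pi/5)$ is sharp for real coefficient vectors. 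Combined with $\beta\beta'\ge5$ and $w<5F_{\nu+1}$, this yields only $|s|^2\gtrsim\ph^{-2\nu}$, a factor $\sqrt5\,\ph\approx3.6$ short of the claim. So the extremal (unit) case needs exact information about specific elements, not a best-approximation heuristic. The non-unit case, as you say yourself, is left open.

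Both pieces can be supplied within your framework, where $N(s)=\beta\beta'$ is the norm from $\mathbb{Q}(\zeta)$ to $\mathbb{Q}$.

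\emph{Unit case.} If $N(s)=5$, then $s$ is $1-\zeta$ times a unit $\pm\zeta^h\ph^{1-m}$, i.e.\ $s=\pm\zeta^h s_m$ (only $m\ge1$ matters). These ten elements all have minimal weight exactly $5F_m$; for $-s_m$ it is $5\max_i a_i-\sum_i a_i=10F_m-5F_m$. Hence weight $5k$ forces $F_m\le k$, i.e.\ $m\le\nu(k)$.

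\emph{Non-unit case.} If $N(s)>5$, then $N(s)\ge25$, since every prime of $\Z[\zeta]$ other than $(1-\zeta)$ has norm at least $11$. This gives $25\le\beta\beta'\le\beta\,(w+2\sqrt\beta)^2/\ph^2$ with $w\le 5F_{\nu+1}-5$. That is incompatible with $\beta<\sqrt5\,\ph^{-(2\nu-1)}$, because the latter forces $\sqrt\beta\,(w+2\sqrt\beta)<5^{3/4}\ph^{3/2}<5\ph$.

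Completed this way, your argument would be a genuinely different proof from the paper's, which uses no norms. The paper normalizes $a_0=0$ and writes $|\re(s)|=\frac{1}{2\ph}|n-b\alpha|$ with $b=a_2+a_3$ and $\alpha=\ph+2$, so the weight $n$ itself is the numerator of a rational approximation of $\alpha$. Ostrowski representations then force $b=L_{m-1}$ and $n=5F_m$. Finally, the imaginary part $|x\ph-y|$, with $x=a_1-a_4$ and $y=a_3-a_2$, pins down the remaining coefficients.
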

The values of $\sigma_5(n)$ for $n<35$ can be visualized in Table \ref{tab:1a}.
\begin{table}
  \centering
  \captionsetup{width=.92\linewidth}
  \caption{The value of $\sigma_5(n)$ for $n<35$. The jumps of $\sigma_5(n)$ on each congruence class modulo $5$ are indicated with a different color depending on the type of weight where they occur: $5F_m$, $L_m$, or $2L_m$.}\label{tab:1a}
  \begin{tabular}{l|*{7}c}
    $\sigma_5(n)$& 0 & $5$ & $10$ & $15$ & $20$ & $25$ & $30$\\ 
    \hline
    $+0$ & $-$ & \cellcolor{cl0}$\sqrt[4]{5}/\ph^{3/2}$& \cellcolor{cl0}$\sqrt[4]{5}/\ph^{5/2}$& \cellcolor{cl0}$\sqrt[4]{5}/\ph^{7/2}$& $\sqrt[4]{5}/\ph^{7/2}$& $\cellcolor{cl0}\sqrt[4]{5}/\ph^{9/2}$& $\sqrt[4]{5}/\ph^{9/2}$\\
    $+1$ & $1$ &\cellcolor{cl2} $1/\ph^2$ &\cellcolor{cl1}  $1/\ph^4$ &  $1/\ph^4$&  $1/\ph^4$&  $1/\ph^4$ &  $1/\ph^4$\\ 
    $+2$ & $1/\ph$& \cellcolor{cl1} $1/\ph^3$&  $1/\ph^3$&  $1/\ph^3$&\cellcolor{cl2}  $1/\ph^5$&  $1/\ph^5$&  $1/\ph^5$\\
    $+3$& \cellcolor{cl1}  $1/\ph$ & \cellcolor{cl2} $1/\ph^3$&  $1/\ph^3$& \cellcolor{cl1}  $1/\ph^5$&  $1/\ph^5$&  $1/\ph^5$&  $1/\ph^5$\\
    $+4$& \cellcolor{cl1}  $1/\ph^2$&  $1/\ph^2$&\cellcolor{cl2}  $1/\ph^4$&  $1/\ph^4$&  $1/\ph^4$&  \cellcolor{cl1} $1/\ph^6$&  $1/\ph^6$
  \end{tabular}
\end{table}



\section{Preliminaries}\label{sec:prelim}
To prove our results, we apply the theory of continued fractions. Reference material on this topic, and on the approximation of real irrational numbers by rational fractions, can be found in \cite{K} and \cite{RS}.

Suppose that $t>0$ is a real number with continued fraction expansion
\[
t = a_0+\frac{1}{a_1+\frac{1}{a_2+\frac{1}{\dots}}}
\]
where the $a_i$'s are integers, $a_0\geq 0$ and $a_i>0$ for $i\geq 1$. We write $t=[a_0;a_1,a_2,\dots]$. The $k$-th convergent of $t$ is the rational fraction,
\[\frac{A_k}{B_k}:=[a_0; a_1,\dots,a_k]:=a_0+\frac{1}{a_1+\frac{1}{\dots+\frac{1}{a_k}}},\]
where $B_k>0$. Formally defining
\[
\begin{bmatrix}
  A_{-1}\\
  B_{-1}
\end{bmatrix}
:=
\begin{bmatrix}
  1\\
  0
\end{bmatrix},
\]
the $k$-th convergent can be obtained via the following linear recursion
\[
\begin{bmatrix}
  A_{0}\\
  B_{0}
\end{bmatrix}
:=
\begin{bmatrix}
  a_0\\
  1
\end{bmatrix},
\quad \text{ and }
\begin{bmatrix}
  A_{k}\\
  B_{k}
\end{bmatrix}
=
a_k
\begin{bmatrix}
  A_{k-1}\\
  B_{k-1}
\end{bmatrix}
+
\begin{bmatrix}
  A_{k-2}\\
  B_{k-2}
\end{bmatrix},
\text{ for } k\geq 1.
\]
From this linear recursion, one can show that $\gcd(A_k,B_k)=1$ for $k\geq 0$. We define the $k$-th difference with respect to the irrational real number $t$ by
\[D_k:=B_kt-A_k.\]
Given an irrational real number $x$, we will denote the distance of $x$ to its nearest integer $[x]$ by
\[
  \nint{x} := |[x]-x|.
\]
Note that if $a$ is an integer, then $\nint{x+a}=\nint{x}$.

 Every positive integer $b$ has a unique expansion in terms of the denominators $B_k$ of the convergents of $t$ ~\textemdash~ this is known as the \textit{Ostrowski representation} of $b$ with respect to $t$.
\begin{thm}[Ostrowski's Algorithm \cite{O}, cf. Chapter II, \S 4 \cite{RS}]\label{thm:Ost}
  Suppose $t>0$ is irrational with continued fraction expansion $t=[a_0;a_1,a_2,\dots]$ satisfying $a_i\in\Z_{>0}$ for $i\geq 1$ and $a_0\in\N$. Let $B_k$ be the denominator of the $k$-th convergent of $t$. Then, every positive integer $b$ can be written uniquely as
  \[b=\sum_{k=0}^{R} c_{k+1}B_{k},\]
  where the $c_k$ satisfy $0\leq c_{k+1}\leq a_{k+1}$ for $k\geq 1$ and $0\leq c_1<a_1$, and for $k\geq 0$ if $c_{k+1}=a_{k+1}$ then $c_{k}=0$.
\end{thm}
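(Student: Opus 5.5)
We prove existence by a greedy procedure and uniqueness by comparing two representations. The only inputs are the recursion $B_{k+1}=a_{k+1}B_k+B_{k-1}$, together with $B_0=1$ and $B_1=a_1$, and the fact that $B_k$ is strictly increasing for $k\ge 1$.

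For existence, we fix $b\ge1$, choose $R$ maximal with $B_R\le b$, and set $c_{R+1}=\lfloor b/B_R\rfloor$ and $b'=b-c_{R+1}B_R$. Since $b<B_{R+1}=a_{R+1}B_R+B_{R-1}$, we get $c_{R+1}\le a_{R+1}$. If $c_{R+1}=a_{R+1}$, then $b'<B_{R-1}$, so the greedy step applied to $b'$ uses only indices $\le R-2$; this forces $c_R=0$, which is exactly the admissibility condition. In general $b'<B_R$, and we recurse on $b'$. At index $0$ we have $B_0=1$ and $B_1=a_1$, so the remainder is $<a_1$, giving $0\le c_1<a_1$. (If $a_1=1$, then $B_0=B_1$ and the greedy choice always picks index $1$, so $c_1=0$, consistent with the condition.) The procedure terminates because the remainder strictly decreases.

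The key lemma is the following: for any admissible digit sequence, $\sum_{k=0}^{j} c_{k+1}B_k<B_{j+1}$. We prove it by induction on $j$. The base case $j=0$ is $c_1<a_1=B_1$. For the inductive step, write the sum up to $j$ as $c_{j+1}B_j$ plus the sum up to $j-1$.
\begin{itemize}
\item If $c_{j+1}<a_{j+1}$, the induction hypothesis gives a total $<(a_{j+1}-1)B_j+B_j\le a_{j+1}B_j<B_{j+1}$.
\item If $c_{j+1}=a_{j+1}$, then $c_j=0$, so the remaining sum runs only up to $j-2$ and is $<B_{j-1}$. The total is then $<a_{j+1}B_j+B_{j-1}=B_{j+1}$.
\end{itemize}
For small $j$ we use the conventions $B_{-1}=0$ and $B_0=1$.

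For uniqueness, suppose two admissible representations of $b$ differ, and let $j$ be the largest index where their digits differ, say $c_{j+1}>c'_{j+1}$. Cancelling the common higher terms, the first tail is at least $B_j$, while the second tail is $<B_j$ by the key lemma, which is a contradiction. The same lemma also shows that the top index of any representation must be the greedy $R$.

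The main obstacle is the bookkeeping at the low indices, since $B_0=B_1$ when $a_1=1$. This is exactly why the condition on $c_1$ is the strict inequality $c_1<a_1$ rather than the condition imposed on the other digits.
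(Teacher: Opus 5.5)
Your proof is correct. The greedy construction produces an admissible expansion, and the bound $\sum_{k=0}^{j}c_{k+1}B_k<B_{j+1}$ for admissible digits, proved by strong induction from $B_{j+1}=a_{j+1}B_j+B_{j-1}$ with $B_{-1}=0$, gives uniqueness once both digit sequences are padded with zeros. The paper does not prove this theorem itself; it cites Ostrowski and Rockett--Sz\"usz (Ch.~II, \S 4), and your argument is essentially the standard proof found there.
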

The following theorem expresses $\|bt\|$ in terms of the differences $D_k$ whenever the Ostrowski representation of $b$ does not involve ``small indices'':

\begin{thm}[cf. Chapter II, \S 4 Theorem 1 (1) \cite{RS}]\label{thm:RS1} Let $t>0$ be an irrational number and $b>1$ an integer. Suppose the Ostrowski representation of $b$ with respect to $t$ is $b=\sum_{k=r}^Rc_{k+1}B_{k}$. If $r\geq 2$, then
  \[\|bt\|=\left|\sum_{k=r}^Rc_{k+1}D_{k}\right|.\]
\end{thm}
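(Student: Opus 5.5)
The plan is to separate $bt$ into an integer part and a small error, and then show that the error has absolute value strictly less than $1/2$. Since $B_kt=A_k+D_k$ by definition, the Ostrowski representation gives
\[
bt=\sum_{k=r}^R c_{k+1}B_kt=\sum_{k=r}^R c_{k+1}A_k+S,\qquad S:=\sum_{k=r}^R c_{k+1}D_k .
\]
The first sum is an integer, so $\|bt\|=\|S\|$. It therefore suffices to prove $|S|<1/2$, because then the nearest integer to $S$ is $0$ and $\|S\|=|S|$.

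First I will collect the standard facts about the differences $D_k$. From the recursion for $(A_k,B_k)$ we get $D_k=a_kD_{k-1}+D_{k-2}$ for $k\ge1$, with $D_{-1}=-1$ and $D_0=t-a_0\in(0,1)$. The $D_k$ alternate in sign and $|D_k|$ decreases strictly to $0$. We also have the classical estimate $|D_k|<1/B_{k+1}$; this is strict because $t$ is irrational, and it follows from $|D_k|=1/(B_{k+1}+B_k\,t_{k+2}^{-1})$ with $t_{k+2}$ the complete quotient. Rewriting the recursion with the index shifted gives the telescoping identity
\[
a_{k+1}|D_k|=|D_{k-1}|-|D_{k+1}| ,
\]
which holds because $D_{k+1}$ and $D_{k-1}$ have the same sign, opposite to that of $D_k$.

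Next, I split $S=S_0+S_1$. Here $S_0$ collects the indices $k\equiv r\pmod 2$, all of which have the sign of $D_r$. The sum $S_1$ collects the indices $k\equiv r+1\pmod 2$, which have the opposite sign. Using only $0\le c_{k+1}\le a_{k+1}$ and the telescoping identity, I bound
\[
|S_0|\le\sum_{j\ge0}a_{r+2j+1}|D_{r+2j}|=|D_{r-1}|-\lim_{m\to\infty}|D_m|=|D_{r-1}|,
\]
and similarly $|S_1|\le|D_r|$. Both inequalities are strict, since the sums over $k\le R$ are finite and omit positive tail terms. Because $S_0$ and $S_1$ have opposite signs, $S$ lies strictly between $-|D_r|$ and $|D_{r-1}|$ in the appropriate orientation, so $|S|<|D_{r-1}|$. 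Finally, the hypothesis $r\ge2$ gives $|D_{r-1}|\le|D_1|<1/B_2$. Since $B_2=a_2a_1+1\ge2$, this yields $|S|<1/2$, and hence $\|bt\|=|S|$ as claimed.

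The main obstacle is the sign and strictness bookkeeping in the bound $|S|<|D_{r-1}|$. One must check that the same-sign terms telescope exactly to $|D_{r-1}|$ and not to something larger. One must also see that the case $r\ge2$ is what makes $|D_{r-1}|<1/2$. For $r=0$ or $r=1$ the bound would only involve $|D_{-1}|=1$ or $|D_0|$, which can exceed $1/2$. This explains the hypothesis, and it is where I expect to have to be careful about the indexing conventions ($c_{k+1}$ paired with $B_k$).
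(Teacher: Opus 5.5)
Your proof is correct. The paper gives no argument for this statement; it simply quotes it from Rockett--Sz\"usz. The natural comparison is therefore with the route the paper's own toolkit suggests, namely the upper half of Lemma~\ref{lem:RS1}. Since $D_r$ and $D_{r+1}$ have opposite signs, that lemma gives $|S|<|c_{r+1}D_r-D_{r+1}|=c_{r+1}|D_r|+|D_{r+1}|\le a_{r+1}|D_r|+|D_{r+1}|=|D_{r-1}|$. Your closing step $|D_{r-1}|\le|D_1|<1/B_2\le 1/2$ then finishes the proof in two lines.

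Your parity split $S=S_0+S_1$, together with the telescoping identity $a_{k+1}|D_k|=|D_{k-1}|-|D_{k+1}|$, reaches the same bound $|S|<|D_{r-1}|$ from first principles. This bound is weaker than the estimate in Lemma~\ref{lem:RS1}. On the other hand, it uses only $0\le c_{k+1}\le a_{k+1}$ and never the admissibility rule that $c_{k+1}=a_{k+1}$ forces $c_k=0$. So it shows that $\|bt\|=|S|$ holds for any such digit expansion of $b$ whose digits vanish below an index $r\ge 2$, not only for the Ostrowski one. What the lemma route buys instead is sharper two-sided control of $|S|$, which the paper needs anyway in the proof of Lemma~\ref{lem:2}.
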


\begin{lem}\label{lem:0}
  Let $t>0$ be irrational and let $b\geq 1$ be an integer. Suppose the Ostrowski representation of $b$ with respect to $t$ is $b=\sum_{k=r}^R c_{k+1}B_{k}$, where $r\geq 2$. Then
  \[[bt]=\sum_{k=r}^Rc_{k+1}A_{k}.\]
\end{lem}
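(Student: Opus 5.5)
We want $[bt]=\sum_{k=r}^R c_{k+1}A_k$, where $[x]$ is the nearest integer. The plan is to write
\[
bt=\sum_{k=r}^R c_{k+1}B_k t=\sum_{k=r}^R c_{k+1}A_k+\sum_{k=r}^R c_{k+1}D_k .
\]
The first sum is an integer; call it $A$. So $bt-A=\sum c_{k+1}D_k$, and it suffices to show $\left|\sum_{k=r}^R c_{k+1}D_k\right|<\tfrac12$. Then $A$ is the nearest integer to $bt$, and it is unique because $t$ is irrational.

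The first case is $b>1$. Theorem \ref{thm:RS1} gives $\|bt\|=\left|\sum c_{k+1}D_k\right|$, so $\|bt\|=|bt-A|$. This alone does not say that $A$ is the nearest integer, only that $A$ is at the same distance. The missing fact is that $|bt-A|<1/2$, which I would take from the proof of Theorem \ref{thm:RS1} in \cite{RS}. There one shows directly that $\left|\sum_{k\geq r}c_{k+1}D_k\right|<1/2$ when $r\ge 2$, and then concludes that it equals $\|bt\|$.

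To make this self-contained, I would prove the bound by hand from the standard facts about the differences:
\begin{itemize}
\item the $D_k$ alternate in sign;
\item $|D_k|$ is decreasing;
\item $D_{k+1}=a_{k+1}D_k+D_{k-1}$, which gives $|D_{k-1}|=a_{k+1}|D_k|+|D_{k+1}|$.
\end{itemize}
Next, group the terms of the sum that have the same sign. The Ostrowski constraints are $c_{k+1}\le a_{k+1}$, with $c_k=0$ whenever $c_{k+1}=a_{k+1}$. Under these constraints, the sum of the same-sign terms from index $k$ onward is at most $|D_{k-1}|$ in absolute value, with strict inequality since the expansion is finite. Since the two sign groups partially cancel, the total lies strictly between $-|D_{r-1}|$ and $|D_{r-1}|$. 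For $r\ge2$ we have $|D_{r-1}|\le |D_1|=|a_1t-(a_0a_1+1)|<1/a_2\le 1$. That alone is not enough.

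The sharper form of the argument is as follows. The sum lies strictly between $D_{r-1}$-type bounds of the two signs, namely in the interval $(-|D_{r}|-\dots,\ |D_{r-1}|)$ with the appropriate orientation. Using $|D_{r-1}|+|D_r|\le |D_{r-2}|\le |D_0|<1$, the two extremes lie in an open interval of length less than $1$. Also $|D_{r-1}|\le|D_1|<1/2$ whenever $a_2\ge2$ or $r\ge3$. The boundary case $r=2$, $a_2=1$ is the main obstacle. I expect to resolve it by the interval-length argument: the sum lies in the interval $(-|D_{r}|,|D_{r-1}|)$ up to sign, and this contains $0$ and has length less than $1$. So if the nearest integer were not $A$, the distance $\|bt\|$ would be strictly smaller than $|bt-A|$, contradicting Theorem \ref{thm:RS1}. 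Here Theorem \ref{thm:RS1} already guarantees equality of distances, so the only risk is a tie at distance $1/2$, which irrationality excludes.

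The case $b=1$ is handled directly. For $r\ge2$ we need $B_k=1$ for some $k\ge2$. This forces $a_1=a_2=1$ and $b=B_2=1$ with $c_3=1$, which contradicts the Ostrowski uniqueness constraint, since $B_1=1$ gives a representation with smaller index. So the case $b=1$ with $r\ge 2$ does not arise, and the lemma follows.
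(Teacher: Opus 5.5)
Your decisive step is the same as the paper's, and it is correct. Theorem~\ref{thm:RS1} gives $|bt-A|=\|bt\|$, and since $bt$ is irrational it cannot be equidistant from two integers, so $A=[bt]$.

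You are wrong, though, to say this ``alone does not say that $A$ is the nearest integer''. An integer at distance exactly $\|bt\|$ from $bt$ is by definition a nearest integer, and irrationality makes it unique. So your whole self-contained detour is unnecessary. As written it is also not a proof (``$D_{r-1}$-type bounds \dots\ with the appropriate orientation'').

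The boundary case $r=2$, $a_2=1$ is not a real obstacle. It comes only from your weak estimate $|D_1|<1/a_2$. The standard bound $|D_k|<1/B_{k+1}$ gives $|D_1|<1/B_2=1/(a_1a_2+1)\le 1/2$. If you want an argument independent of Theorem~\ref{thm:RS1}, use Lemma~\ref{lem:RS1} with $c_{r+1}\neq 0$: then $|bt-A|<c_{r+1}|D_r|+|D_{r+1}|\le a_{r+1}|D_r|+|D_{r+1}|=|D_{r-1}|\le |D_1|<1/2$.

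For $b=1$ your conclusion is right but the computation is wrong: $a_1=a_2=1$ gives $B_2=2$, not $1$. In fact $B_k\ge B_2=a_1a_2+1\ge 2$ for all $k\ge 2$. So $b=1$ never has an Ostrowski representation with $r\ge 2$, and the case is vacuous. The paper applies Theorem~\ref{thm:RS1}, which is stated for $b>1$, without comment, so you are right that $b=1$ needs a word; just give the correct reason.
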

\begin{proof}
  Since $r\geq 2$, by Theorem~\ref{thm:RS1} we find that
  \[
   \|bt\|=\left|bt-\sum_{k=r}^R c_{k+1}A_{k}\right|.\\
  \]
  Since $t$ is irrational $\|bt\|< 1/2$. This implies the result.
\end{proof}

\begin{lem}[cf. Chapter II, \S 4 Lemma 1 \cite{RS}]\label{lem:RS1} Let $t>0$ be irrational and let $b\geq 1$ be an integer. Suppose the Ostrowski representation of $b$ with respect to $t$ is $b=\sum_{k=r}^R c_{k+1}B_{k}$. Then,
  \[
    |(c_{r+1}-1)D_{r}-D_{r+1}|<\left|\sum_{k=r}^R c_{k+1}D_{k}\right|<|c_{r+1}D_{r}-D_{r+1}|.
  \]
\end{lem}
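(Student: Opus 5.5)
Write $S=\sum_{k=r}^R c_{k+1}D_k$, where $c_{r+1}\geq 1$ is the lowest nonzero digit. I will use the standard facts about the differences $D_k=B_kt-A_k$:
\begin{itemize}
\item they alternate in sign;
\item $|D_k|$ is strictly decreasing for $k\geq 0$;
\item they satisfy the recursion $D_{k+1}=a_{k+1}D_k+D_{k-1}$, i.e. $|D_{k-1}|=a_{k+1}|D_k|+|D_{k+1}|$.
\end{itemize}
Let $\varepsilon=\operatorname{sign}D_r$, so that $\varepsilon D_{r+1}=-|D_{r+1}|$. Then
\[
|c_{r+1}D_r-D_{r+1}|=c_{r+1}|D_r|+|D_{r+1}|,\qquad |(c_{r+1}-1)D_r-D_{r+1}|=(c_{r+1}-1)|D_r|+|D_{r+1}|.
\]
Put $T=\sum_{k=r+1}^R c_{k+1}D_k$, so $\varepsilon S=c_{r+1}|D_r|+\varepsilon T$. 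The lemma therefore reduces to the two-sided estimate
\[
-|D_r|+|D_{r+1}|<\varepsilon T<|D_{r+1}|.
\]
Both outer quantities are then positive, which gives the stated inequality of absolute values.

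To get this estimate I prove by downward induction on $s$ a bound for tails $U_s=\sum_{k=s}^R c_{k+1}D_k$, whose digits satisfy the Ostrowski constraints of Theorem~\ref{thm:Ost}. Let $\sigma_s=\operatorname{sign}D_s$. The claim has two parts:
\begin{enumerate}
\item[(i)] Always $-|D_s|<\sigma_sU_s<|D_{s-1}|$, with $-|D_s|<\sigma_sU_s$ strict unless the tail is empty.
\item[(ii)] If $c_{s+1}\leq a_{s+1}-1$, the upper bound improves to $\sigma_sU_s<|D_{s-1}|-|D_s|=(a_{s+1}-1)|D_s|+|D_{s+1}|$.
\end{enumerate}
For the step, write $\sigma_sU_s=c_{s+1}|D_s|-\sigma_{s+1}U_{s+1}$ and apply the hypothesis to $U_{s+1}$.
\begin{itemize}
\item Upper bound: the lower bound $\sigma_{s+1}U_{s+1}>-|D_{s+1}|$ gives $\sigma_sU_s<c_{s+1}|D_s|+|D_{s+1}|$. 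This is at most $a_{s+1}|D_s|+|D_{s+1}|=|D_{s-1}|$, and at most $|D_{s-1}|-|D_s|$ when $c_{s+1}\leq a_{s+1}-1$.
\item Lower bound, case $c_{s+1}\geq1$: then the Ostrowski rule forces $c_{s+2}<a_{s+2}$, so (ii) applies to $U_{s+1}$. This gives $\sigma_sU_s>|D_s|-(|D_s|-|D_{s+1}|)>0$.
\item Lower bound, case $c_{s+1}=0$: then $\sigma_sU_s=-\sigma_{s+1}U_{s+1}>-|D_s|$ by the unconditional bound (i).
\end{itemize}
Strictness comes from the irrationality of $t$, since no $D_k$ vanishes, and from the finiteness of the sum.

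Apply this with $s=r+1$, noting $\sigma_{r+1}=-\varepsilon$. Because $c_{r+1}\geq1$, the constraint gives $c_{r+2}\leq a_{r+2}-1$, so (ii) holds for $U_{r+1}=T$:
\[
-|D_{r+1}|<-\varepsilon T<|D_r|-|D_{r+1}|.
\]
This is exactly the required estimate. The main obstacle is setting up the induction hypothesis so it is strong enough to survive the case $c_{s+1}=0$. There the rule ``$c_{k+1}=a_{k+1}\Rightarrow c_k=0$'' does not constrain the next digit, so the weaker bound (i) is all that is available. One must check that (i) is still enough for the lower bound, and that the index-$0$ boundary digit $c_1<a_1$ fits the same pattern.
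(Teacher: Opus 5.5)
Your proof is correct. The paper gives no argument of its own for this lemma and simply cites \cite{RS}, so your downward induction on the tails $U_s$ is a self-contained proof rather than a variant of something in the text. The reduction to $-|D_r|+|D_{r+1}|<\varepsilon T<|D_{r+1}|$ is right, and so is the final application at $s=r+1$, where $c_{r+1}\geq 1$ forces $c_{r+2}\leq a_{r+2}-1$.

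You can simplify the induction. The case split in the lower bound is unnecessary: in every case $\sigma_sU_s=c_{s+1}|D_s|-\sigma_{s+1}U_{s+1}>c_{s+1}|D_s|-|D_s|\geq -|D_s|$, by the unconditional upper bound in (i). So (i) holds for any digits with $0\leq c_{k+1}\leq a_{k+1}$. The adjacency rule of Theorem~\ref{thm:Ost} is then needed only once, at the top level $s=r+1$, and the ``main obstacle'' you describe does not really arise.

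You could also avoid induction altogether. The positive part of $\varepsilon T$ is at most $\sum a_{k+1}|D_k|$ over $k\equiv r \pmod 2$ with $r+2\leq k\leq R$. Its negative part is at most $(a_{r+2}-1)|D_{r+1}|$ plus $\sum a_{k+1}|D_k|$ over $k\equiv r+1 \pmod 2$ with $r+3\leq k\leq R$. Via $a_{k+1}|D_k|=|D_{k-1}|-|D_{k+1}|$, these finite sums telescope to $|D_{r+1}|$ and $|D_r|-|D_{r+1}|$ respectively, each minus a positive final term. That gives both strict bounds at once.

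Two minor points. The qualifier ``strict unless the tail is empty'' in (i) is vacuous: for $U_{R+1}=0$, both (i) and (ii) hold strictly, the latter because $|D_R|>|D_{R+1}|$. And the index-$0$ boundary digit never enters, since your induction only runs over $s\geq r+1\geq 1$.
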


From now on we will choose the primitive $5$-th root of unity $\z:=\exp(2\pi i/5)$, so that the following equations hold:
\begin{align*}
  \z+\z^4 &= \frac{1}{\ph},\\
  \z^2+\z^3 &= -\ph.
\end{align*}
Therefore, $\re(\z)=\re(\z^4)=1/(2\ph)$ and $\re(\z^2)=\re(\z^3)=-\ph/2$. From which we find that
\begin{align*}
  \im(\z)^2=\im(\z^4)^2 &= \frac{4\ph^2-1}{4\ph^2}=\ph^2\frac{4-\ph^2}{4},\\
  \im(\z^2)^2=\im(\z^3)^2 &=\frac{4-\ph^2}{4}.
\end{align*}
In particular,
\begin{equation}\label{eq:ratio}
  \frac{\im(\z)}{\im(\z^2)} = \ph.
\end{equation}
We also denote
\[
  \alpha := \ph + 2,
\]
since this constant will frequently appear in our arguments. 
The continued fraction expansion of $\ph$ is $\ph=[1;1,1,\dots]$, so $a_k=1$ for $k\geq 0$, and the convergents of $\ph$ are given by
\[
\begin{bmatrix}
  A_{k}\\
  B_{k}
\end{bmatrix}
=
\begin{bmatrix}
  F_{k+2}\\
  F_{k+1}
\end{bmatrix},
\text{ for } k\geq 1.
\]
The continued fraction expansion of $\alpha$ is $\alpha=\ph+2=[3;1,1,\dots]$, so $a_0=3$ and $a_k=1$ for $k\geq 1$. Hence, the convergents of $\alpha$ are given by
\[
\begin{bmatrix}
  A_{k}\\
  B_{k}
\end{bmatrix}
=
\begin{bmatrix}
  L_{k+2}\\
  F_{k+1}
\end{bmatrix},
\text{ for } k\geq 1.
\]
\begin{lem}\label{lem:1}
    If $b>0$ is an integer, then the Ostrowski representation  of $b$ with respect to $t=\ph$ or $\alpha$ expresses $b$ uniquely as a sum of non-consecutive Fibonacci numbers, i.e.
    \[b=\sum_{k=r}^Rc_{k}F_{k}\]
    where $r\geq 2$, $c_{k}\in \{0,1\}$, and $c_{k}c_{k+1}=0$ for $k\geq 2$. Furthermore, if $r\geq 3$, then
    \[
    [bt]=
    \begin{cases}
        \sum_{k=r}^Rc_{k}F_{k+1} & \text{if }t=\ph,\\
        \sum_{k=r}^Rc_{k}L_{k+1} & \text{if }t=\alpha.
    \end{cases}
    \]
\end{lem}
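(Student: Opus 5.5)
The plan is to derive the lemma directly from Ostrowski's Algorithm (Theorem~\ref{thm:Ost}) and Lemma~\ref{lem:0}, using the explicit convergents of $\ph$ and $\alpha$ recorded above.

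First, I will identify the denominators $B_k$. For both $t=\ph=[1;1,1,\dots]$ and $t=\alpha=[3;1,1,\dots]$ we have $a_k=1$ for all $k\geq 1$, so the recursion gives the same denominators for both numbers: $B_{-1}=0$, $B_0=1$, and $B_k=B_{k-1}+B_{k-2}$. Hence $B_k=F_{k+1}$ for all $k\geq 0$; in particular $B_0=B_1=1=F_1=F_2$. The numerators are $A_k=F_{k+2}$ for $\ph$ and $A_k=L_{k+2}$ for $\alpha$, as stated in the text. For $\alpha$ this can be checked from $A_0=3=L_2$, $A_1=4=L_3$ and the recursion.

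Second, I will apply Theorem~\ref{thm:Ost}. It gives $b=\sum_{k\ge 0} c_{k+1}B_k$ with $0\le c_1<a_1=1$, so $c_1=0$ and the $B_0$ term never occurs. For $k\geq 1$ the coefficients satisfy $c_{k+1}\in\{0,1\}$, and the rule ``if $c_{k+1}=a_{k+1}=1$ then $c_k=0$'' means no two consecutive coefficients are both $1$. Reindexing with $j=k+1$, so that $B_k=F_{j}$ and the coefficient $c_j$ multiplies $F_j$ with $j\geq 2$, gives
\[b=\sum_{j\ge 2} c_j F_j,\qquad c_j\in\{0,1\},\qquad c_jc_{j+1}=0.\]
This is the Zeckendorf representation. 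Its uniqueness follows from the uniqueness part of Theorem~\ref{thm:Ost}, because the reindexing is a bijection between admissible coefficient sequences. One point needs care: the constraint on $c_2$ coming from $c_1$ is vacuous since $c_1=0$, so the admissible sets match exactly.

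Third, for the formula for $[bt]$, the hypothesis $r\ge 3$ in the Fibonacci indexing means the smallest index $k$ appearing in the original indexing satisfies $k=j-1\geq 2$. That is exactly the hypothesis of Lemma~\ref{lem:0}, which yields $[bt]=\sum c_{k+1}A_k$. Substituting $A_k=F_{k+2}=F_{j+1}$ for $\ph$ and $A_k=L_{k+2}=L_{j+1}$ for $\alpha$ gives the stated sums.

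I expect the only real obstacle to be index bookkeeping. This means keeping track of the shift between $B_k$ and $F_{k+1}$ and checking that the small indices work out: $c_1=0$ is forced, and the threshold $r\ge 3$ corresponds precisely to $r\ge 2$ in Lemma~\ref{lem:0}. The rest is a direct substitution.
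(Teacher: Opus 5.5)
Your proposal is correct and follows the same route as the paper: apply Theorem~\ref{thm:Ost} using $a_k=1$ and $B_k=F_{k+1}$, reindex to get the non-consecutive Fibonacci representation, and then invoke Lemma~\ref{lem:0} with $A_k=F_{k+2}$ or $L_{k+2}$ once $r\geq 3$. You fill in the bookkeeping that the paper's two-line proof leaves implicit, namely that $c_1=0$ is forced, that the admissible coefficient sequences correspond bijectively, that $r\ge 3$ matches the hypothesis $r\ge 2$ of Lemma~\ref{lem:0}, and that the numerator formulas also hold at $k=0$.
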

\begin{proof}
 Using the fact that $a_{k}=1$ for $k\geq 1$, the existence and uniqueness of the expression for $b$ is a consequence of Theorem~\ref{thm:Ost}. After re-indexing the sum, if $r\ge 3$, applying Lemma~\ref{lem:0}, the formula for $[bt]$ follows.
\end{proof}
In what follows, we always consider $t=\ph$ or $\alpha$. Since Lemma~\ref{lem:1} shows that Ostrowski representations coincide for both values of $t$, we will not specify $t$ unless necessary.

For $t=\ph$, it is easy to show by induction that for $k\geq -1$, the $k$-th difference is
\begin{equation*}
    D_{k}= F_{k+1}\ph - F_{k+2}=\frac{(-1)^{k}}{\ph^{k+1}}.
\end{equation*}
From this fact it follows that, for $t=\alpha$, the $k$-th difference is
\begin{equation}\label{eq:d2} D_{k}= F_{k+1}\alpha -L_{k+2}= F_{k+1}\ph - F_{k+2}=\frac{(-1)^k}{\ph^{k+1}}.
\end{equation}


\begin{lem}\label{lem:2} Let $t=\ph$ or $\alpha$. If $b>0$ is an integer with Ostrowski representation
  $b=\sum_{k=r}^{R}c_{k}F_{k}$,
 then
  \begin{equation}\label{eq:lem2}
    \|bt\|>\frac{1}{\ph^{r+1}}.
  \end{equation}
\end{lem}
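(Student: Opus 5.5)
The plan is to move to the Ostrowski indexing of Theorem~\ref{thm:Ost} and apply Lemma~\ref{lem:RS1}. Since $B_k=F_{k+1}$ for both $t=\ph$ and $t=\alpha$, the representation $b=\sum_{k=r}^R c_kF_k$ of Lemma~\ref{lem:1} reads $b=\sum_{k=r-1}^{R-1}c_{k+1}B_k$. We may assume $c_r=1$, i.e.\ $r$ is the smallest index actually used. The Ostrowski starting index is then $r-1\geq 1$, and the leading coefficient is $c_{r}=1$.

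Put $S:=\sum_{k=r-1}^{R-1}c_{k+1}D_k$. Since $\sum c_{k+1}A_k$ is an integer, $bt-S$ is an integer, so $\|bt\|=\min\{|S|,1-|S|\}$ provided $|S|<1$. We bound $|S|$ on both sides with Lemma~\ref{lem:RS1} and \eqref{eq:d2}.

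\emph{Lower bound.} With coefficient $c_r=1$, Lemma~\ref{lem:RS1} gives
\[
|S|>|0\cdot D_{r-1}-D_{r}|=|D_r|=\frac{1}{\ph^{r+1}}.
\]

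\emph{Upper bound.} Lemma~\ref{lem:RS1} also gives $|S|<|D_{r-1}-D_r|$. The differences $D_k=(-1)^k/\ph^{k+1}$ alternate in sign, so
\[
|D_{r-1}-D_r|=\frac{1}{\ph^{r}}+\frac{1}{\ph^{r+1}}=\frac{1}{\ph^{r-1}}.
\]
Hence $|S|<1/\ph^{r-1}$.

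Now split on $r$. If $r\geq 3$, the Ostrowski starting index $r-1$ is at least $2$. Theorem~\ref{thm:RS1} then applies directly and gives $\|bt\|=|S|>1/\ph^{r+1}$. (If $b=1$, then $r=2$, so this case has $b>1$ as that theorem requires.)

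If $r=2$, Theorem~\ref{thm:RS1} is not available, and this is the only real obstacle. Here we use the two-sided bound directly: $1/\ph^3<|S|<1/\ph<1$. Therefore $1-|S|>1-1/\ph=1/\ph^2>1/\ph^3$, and so $\|bt\|=\min\{|S|,1-|S|\}>1/\ph^{3}$, which is \eqref{eq:lem2} for $r=2$.

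The main point to check carefully is the index shift between the Fibonacci indexing of Lemma~\ref{lem:1} and the Ostrowski indexing of Lemma~\ref{lem:RS1}. In particular, the lowest-index coefficient must be $1$, so that the term $(c_{r}-1)D_{r-1}$ vanishes.
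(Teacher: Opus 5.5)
Your proof is correct and is essentially the paper's argument: you reindex to Ostrowski form with $c_r=1$, use Lemma~\ref{lem:RS1} to get $1/\ph^{r+1}<|S|<1/\ph^{r-1}\leq 1/\ph$, and conclude via $\|bt\|=\min\{|S|,1-|S|\}$. The only difference is that the paper applies this last step uniformly for all $r\geq 2$, so your separate appeal to Theorem~\ref{thm:RS1} for $r\geq 3$ is harmless but unnecessary.
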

\begin{proof} We may assume $c_{r}=1$. Reindexing, $b=\sum_{k=r-1}^{R-1} c_{k+1}F_{k+1}$. By Lemma~\ref{lem:RS1}, we find that
  \[|D_{r}|<\left|\sum_{k=r-1}^{R-1}c_{k+1}D_{k}\right|<|D_{r-1}-D_{r}|.\]
  By Lemma~\ref{lem:1}, $r\geq 2$ and we have
  \[|D_{r-1}-D_{r}|=\frac{1}{\ph^{r-1}}< 1.\]
  Hence $0< |\sum_{k=r-1}^{R-1} c_{k+1}D_{k}|< 1$, and
  \begin{align*}
    \|bt\| &= \left\|\sum_{k={r-1}}^{R-1}c_{k+1}D_{k}\right\|
          \nexteq \min\left\{\left|\sum_{k=r-1}^{R-1}c_{k+1}D_{k}\right|,1-\left|\sum_{k=r-1}^{R-1}c_{k+1}D_{k}\right|\right\}\\
           &>\min\{|D_{r}|,1-|D_{r-1}-D_{r}|\}\\
           &=\min\left\{\frac{1}{\ph^{r+1}},1-\left(\frac{1}{\ph^{r}}+\frac{1}{\ph^{r+1}}\right)\right\}\\
    &=\frac{1}{\ph^{r+1}}\min\{1,\ph^{r+1}-\ph^{2}\}\\
    &=\frac{1}{\ph^{r+1}}.\qedhere
  \end{align*}
\end{proof}

 Notice that since $1+\z+\z^2+\z^3+\z^4=0$, we have that $\Sigma_5(n)\subseteq \Sigma_5(n+5f)$ for all $f\geq 0$. Our arguments will be simpler if we restrict our attention to those sums of weight $n$ which cannot be represented by a sum of a smaller weight congruent to $n$ modulo $5$. In the following sections, we show that to prove Theorems \ref{thm:1a} and \ref{thm:2a}, it is sufficient to only consider sums belonging to the set
 \[\Sigma_5^0(n):=\{a_1\z+a_2\z^2+a_3\z^3+a_4\z^4: a_i\in\N,\  \sum_{i=1}^4 a_i=n\}.\]
 We write $\sigma_5^0(n):=\min\{|s|:s\in \Sigma_5^0(n)\}.$\\

The following lemma relates the minimal sum problem to two coupled Diophantine approximation problems, one for $\alpha$ and one for $\ph$. 
\begin{lem}\label{lem:bnxy}
    Suppose that  \[s=a_1\z+a_2\z^2+a_3\z^3+a_4\z^4\in\Sigma_5^0(n).\]
    Then,
    \[|\re(s)|=\frac{1}{2\ph}|n-b\alpha|,\]
    where $b:=a_2+a_3$, and
    \[|\im(s)|^2=\frac{4-\ph^2}{4}|x\ph- y|^2,\]
    where $x:=a_1-a_4$, and $y:=a_3-a_2$. Furthermore,  $1/\ph^{4}\geq |s|$ implies that $n=[b\alpha]$ and $b=[n/\alpha]$; and $1/\ph^3\geq |s|$ implies that $y=[x\ph]$ and $x=[y/\ph]$.
\end{lem}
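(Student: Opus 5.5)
The plan is to compute $\re(s)$ and $\im(s)$ directly from the explicit values of $\re(\z^j)$ and $\im(\z^j)$ in Section~\ref{sec:prelim}. Then I turn the hypothesis $|s|\le 1/\ph^4$ (resp.\ $1/\ph^3$) into a bound below $1/2$ on $|n-b\alpha|$ (resp.\ $|x\ph-y|$). This identifies the relevant integer as the nearest integer.

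\emph{Real part.} Since $\re(\z)=\re(\z^4)=1/(2\ph)$ and $\re(\z^2)=\re(\z^3)=-\ph/2$, we have
\[
\re(s)=\frac{a_1+a_4}{2\ph}-\frac{(a_2+a_3)\ph}{2}.
\]
Substituting $a_1+a_4=n-b$ and using $\ph^2=\ph+1$ gives
\[
\re(s)=\frac{1}{2\ph}\bigl(n-b-b\ph^2\bigr)=\frac{1}{2\ph}\bigl(n-b(\ph+2)\bigr)=\frac{1}{2\ph}(n-b\alpha).
\]

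\emph{Imaginary part.} We have $\im(\z^4)=-\im(\z)$ and $\im(\z^3)=-\im(\z^2)$. Hence
\[
\im(s)=(a_1-a_4)\im(\z)+(a_2-a_3)\im(\z^2).
\]
By (\ref{eq:ratio}) this equals $\im(\z^2)(x\ph-y)$. Squaring and using $\im(\z^2)^2=(4-\ph^2)/4$ gives the stated formula.

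\emph{Nearest-integer claims.} Suppose $|s|\le 1/\ph^4$. Then $|\re(s)|\le 1/\ph^4$, so
\[
|n-b\alpha|\le \frac{2}{\ph^3}=\frac{2}{2+\sqrt5}<\frac12 .
\]
Since $n$ is an integer and $b\alpha$ is irrational, this gives $n=[b\alpha]$. Dividing by $\alpha>1$ gives $|n/\alpha-b|<1/2$, hence $b=[n/\alpha]$.

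Similarly, suppose $|s|\le 1/\ph^3$. Then $|\im(s)|\le 1/\ph^3$, so
\[
|x\ph-y|\le \frac{2}{\ph^3\sqrt{4-\ph^2}} .
\]
Here $4-\ph^2=(5-\sqrt5)/2\approx1.382$, so the bound is roughly $0.40<1/2$. This gives $y=[x\ph]$. Dividing by $\ph>1$ gives $|x-y/\ph|<1/2$, so $x=[y/\ph]$.

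\emph{Main obstacle.} The only step needing care is checking that these numerical bounds fall below $1/2$. The second one requires a clean bound such as $\ph^3\sqrt{4-\ph^2}>4$. I would verify it exactly by squaring: $\ph^6=9+4\sqrt5$, so the claim becomes $(9+4\sqrt5)(5-\sqrt5)/2>16$. The left side expands to $(25+11\sqrt5)/2\approx 24.8$, which is indeed greater than $16$.
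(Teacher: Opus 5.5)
Your proof is correct and follows the paper's argument essentially verbatim. You compute $\re(s)$ and $\im(s)$ from the explicit real and imaginary parts of the $\z^j$ (using $1+\ph^2=\alpha$ and (\ref{eq:ratio})), then check that the bounds $2/\ph^3$ and $2/(\ph^3\sqrt{4-\ph^2})$ are below $1/2$, and you verify these numerical bounds more explicitly than the paper does. The appeal to irrationality of $b\alpha$ is unnecessary, and false when $b=0$: a distance strictly below $1/2$ already determines the nearest integer uniquely.
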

\begin{proof}
    The calculation of $|\re(s)|$ is elementary. On the other hand we have by (\ref{eq:ratio})
  \begin{align*}
    |\im(s)|^2&=|(a_1-a_4)\im(\z)-(a_3-a_2)\im(\z^2)|^2\\
    &=\im(\z^2)^2|(a_1-a_4)\ph-(a_3-a_2)|^2\\
    &=\frac{4-\ph^2}{4}|x\ph-y|^2.
  \end{align*}
  Now, if $1/\ph^4\geq |s|\geq |\re(s)|$, then
    \[\frac{1}{2}>\frac{2}{\ph^3}\geq |n-b\alpha|.\]
  So, it follows that $n=[b\alpha]$ and $b=[n/\alpha]$. Similarly, if $1/\ph^3\geq |s|$, then
  \[\frac{1}{2}>\sqrt{\frac{4}{4-\ph^2}}\frac{1}{\ph^3}\geq |x\ph-y|.\]
  Hence $y=[x\ph]$ and $x=[y/\ph]$.
\end{proof}
\section{Proof of Theorem~\ref{thm:1a}}\label{sec:thm1}
Our proof strategy has three parts: (1) we exhibit sums achieving the claimed minimum; (2) we reduce the problem to showing a lower bound for sums in $\Sigma_5^0(n)$; (3) using Lemma~\ref{lem:bnxy}, the minimum sum problem is reduced into Diophantine inequalities in terms of convergents of $\alpha$ or $\ph$, which we resolve using Ostrowski representations ~\textemdash~ first bounding the real part, and then (if the real part does not determine $s$) the imaginary part.

Observe that
\[L_{m} \equiv \begin{cases}
    2 \pmod{5} & \text{ if } m\equiv 0\pmod{4},\\
    1 \pmod{5} & \text{ if } m\equiv 1\pmod{4},\\
    3 \pmod{5} & \text{ if } m\equiv 2\pmod{4},\\
    4 \pmod{5} & \text{ if } m\equiv 3\pmod{4}.\\
\end{cases}\]
Then, for all $m\geq 0$, $L_{m}\equiv 2L_{m+1}\equiv L_{m+4}\equiv 2L_{m+5}\not \equiv 0\pmod{5}$. This implies that for $n\not\equiv 0\pmod{5}$, $\kappa(n)=m$ if and only if
\begin{itemize}
    \item [(i)]  $n\equiv L_{m+1}\pmod{5}$ and $n\in[L_{m+1},2L_{m+2})$, or
    \item [(ii)] $n\equiv 2L_{m}\pmod{5}$ and $n\in [2L_{m},L_{m+3})$.
\end{itemize}
We split the proof into intervals of the type $[L_m,2L_{m+1})$, and $[2L_{m},L_{m+3})$.

\begin{lem}\label{lem:ub1}
  For all integers $n\geq 1$, $n\not\equiv 0\pmod{5}$,
  \[\sigma_5(n)\leq \frac{1}{\ph^{\kappa(n)}}.\]
\end{lem}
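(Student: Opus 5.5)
The plan is to prove the upper bound by exhibiting explicit real sums of small weight. Monotonicity in $n$ along each residue class modulo $5$ then transfers them to every $n$ with the given value of $\kappa(n)$. The basic observation, already noted before Lemma~\ref{lem:bnxy}, is that $1+\z+\z^2+\z^3+\z^4=0$ gives $\Sigma_5(n)\subseteq\Sigma_5(n+5f)$ for all $f\ge 0$. Hence $\sigma_5(n+5f)\le\sigma_5(n)$ whenever $\sigma_5(n)$ is attained by a nonzero sum. By the characterization following the statement of Theorem~\ref{thm:1a}, $\kappa(n)=m$ means one of two things. Either $n\equiv L_{m+1}\pmod 5$ and $n\ge L_{m+1}$, or $n\equiv 2L_m\pmod 5$ and $n\ge 2L_m$. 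So it suffices to produce, for each relevant $m$, a sum of weight $L_{m+1}$ and a sum of weight $2L_m$, each of absolute value exactly $\ph^{-m}$.

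For the constructions I would use real sums of the form
\[
s=a_0+x(\z+\z^4)+y(\z^2+\z^3),\qquad a_0,x,y\in\N,
\]
which have weight $a_0+2x+2y$. Since $\z+\z^4=1/\ph=\ph-1$ and $\z^2+\z^3=-\ph$, we get
\[
s=(x-y)\ph+(a_0-x).
\]
The target is the identity $\ph^{-m}=(-1)^m(F_{m+1}-F_m\ph)$, which follows from $D_k=(-1)^k/\ph^{k+1}$ for $t=\ph$ with $k=m-1$. It gives $|F_m\ph-F_{m+1}|=\ph^{-m}$, so it suffices to make $s=\pm(F_m\ph-F_{m+1})$ with nonnegative coefficients. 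There are two choices.
\begin{itemize}
\item Taking $a_0=0$, $x=F_{m+1}$, $y=F_{m-1}$ gives $x-y=F_m$ and $a_0-x=-F_{m+1}$. The weight is $2(F_{m+1}+F_{m-1})=2L_m$.
\item Taking $x=0$, $y=F_m$, $a_0=F_{m+1}$ gives $s=-F_m\ph+F_{m+1}$. The weight is $F_{m+1}+2F_m=L_{m+1}$.
\end{itemize}
Both sums are nonzero because $\ph$ is irrational, and both have absolute value $\ph^{-m}$. As a sanity check, $m=1$ gives $1+\z^2+\z^3=-1/\ph$ of weight $3=L_2$, and $\z+\z^4$ of weight $2=2L_1$, matching Table~\ref{tab:1a}.

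The remaining step, which I expect to be the only fiddly one, is bookkeeping for small $n$ where $\kappa(n)$ comes from negative indices or from $m\le 0$. Here the Fibonacci identities and the nonnegativity of $x,y,a_0$ must be checked. Extending $F$ and $L$ to negative indices can make some coefficient negative. For those finitely many residue-class starting points, say $n\le 4$, I would verify directly that the required value $\ph^{-\kappa(n)}$, which is at least $1/\ph^2$, is attained. Examples are $\z$ for $n=1$, $\z+\z^4$ for $n=2$, $1+\z^2+\z^3$ for $n=3$, and $2+2\z^2$ or a similar sum for $n=4$; in each case I would compare with the first column of Table~\ref{tab:1a}. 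Once these base cases are settled, the inclusion $\Sigma_5(n)\subseteq\Sigma_5(n+5f)$ completes the proof of the lemma.
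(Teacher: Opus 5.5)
Your proposal is correct and is essentially the paper's proof. It uses the same two real sums, $F_{m+1}+F_m(\z^2+\z^3)$ of weight $L_{m+1}$ and $F_{m+1}(\z+\z^4)+F_{m-1}(\z^2+\z^3)$ of weight $2L_m$, each of modulus $\ph^{-m}$, padded by multiples of $1+\z+\z^2+\z^3+\z^4$. Your small-$n$ bookkeeping is unnecessary: $\kappa(n)\ge 0$ for every $n\ge 1$ (check $n=1,2,3,4$ and use monotonicity within each residue class), and your coefficients are nonnegative for all $m\ge 0$ (with $F_{-1}=1$). Your proposed witness $2+2\z^2$ for $n=4$ is also wrong, since $|2+2\z^2|=2/\ph$; the sum $2+\z^2+\z^3$, which is your own second family at $m=2$, gives the required $1/\ph^2$.
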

\begin{proof}
Suppose first that $n\equiv L_{m}\pmod{5}$ and $n\in [L_m,2L_{m+1})$ for some $m\geq 1$. Then $\kappa(n)=m-1$. Consider the sum
  \[s = f(1+\z+\z^2+\z^3+\z^4) + F_{m}+F_{m-1}\z^2+F_{m-1}\z^3,\]
  where $f$ is a non-negative integer and $5f=n-L_m$. Then $s$ has weight $5f + F_{m}+2F_{m-1}=n$ and by (\ref{eq:d2}),
  \begin{align*}
    |s| &= |F_{m}+F_{m-1}\z^2+F_{m-1}\z^3|=|F_{m}-F_{m-1}\ph|= \frac{1}{\ph^{m-1}}.
  \end{align*}
  Therefore,
  \[\sigma_5(n)\leq \frac{1}{\ph^{m-1}}=\frac{1}{\ph^{\kappa(n)}}.\]

  Now, suppose that $n\equiv 2L_{m}$ and $n\in [2L_m,L_{m+3})$. Then $\kappa(n)=m$. Write $5f=n-2L_m$ for some non-negative integer $f$ and consider the sum
  \[s = f(1+\z+\z^2+\z^3+\z^4) + F_{m+1}\z + F_{m-1}\z^2 + F_{m-1}\z^3 + F_{m+1}\z^4.\]
  Then $s$ has weight $5f+2F_{m+1}+2F_{m-1}=n$, and
  \begin{align*}
    |s| &= |F_{m+1}\z + F_{m-1}\z^2 + F_{m-1}\z^3 + F_{m+1}\z^4|\\
    &= \left|\frac{F_{m+1}}{\ph}- F_{m-1}\ph\right|\\
    &=\frac{1}{\ph} |L_{m}- F_{m-1}\alpha|\\
    &=\frac{1}{\ph^{m}}.\refby{eq:d2}
  \end{align*}
  Therefore,
  \[\sigma_5(n)\leq \frac{1}{\ph^m}=\frac{1}{\ph^{\kappa(n)}}.\qedhere\]
\end{proof}

\begin{prop}\label{prop:red} The inequality
\[\sigma_5^0(n)\geq \frac{1}{\ph^{\kappa(n)}},\]
for all integers $n\geq 1$ with $n\not\equiv 0\pmod{5}$, implies Theorem~\ref{thm:1a}.
\end{prop}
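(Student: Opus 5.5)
We will show that the assumed lower bound for $\sigma_5^0$, together with the upper bound of Lemma~\ref{lem:ub1}, yields $\sigma_5(n)=1/\ph^{\kappa(n)}$. Since Lemma~\ref{lem:ub1} already gives $\sigma_5(n)\le 1/\ph^{\kappa(n)}$, it remains to show $|s|\ge 1/\ph^{\kappa(n)}$ for every nonzero $s\in\Sigma_5(n)$.

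The plan is to reduce an arbitrary nonzero sum of weight $n$ to an element of some $\Sigma_5^0(n')$ with $n'\le n$ and $n'\equiv n\pmod 5$. Take $s=a_0+a_1\z+a_2\z^2+a_3\z^3+a_4\z^4$ with $\sum a_i=n$. Set $c=\min_i a_i$ and subtract $c(1+\z+\dots+\z^4)=0$; this leaves $s$ unchanged and gives a representation of weight $n-5c$ in which some coefficient vanishes. Using multiplication by a power of $\z$, which preserves both the absolute value and the weight, we rotate so that the vanishing coefficient is the constant term. Thus $s\z^{j}\in\Sigma_5^0(n')$ with $n'=n-5c\equiv n\pmod 5$ and $1\le n'\le n$. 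Note $n'\ge1$ because $n'\not\equiv 0\pmod 5$.

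By hypothesis, $|s|=|s\z^j|\ge\sigma_5^0(n')\ge 1/\ph^{\kappa(n')}$. The remaining step is the monotonicity $\kappa(n')\le\kappa(n)$ whenever $n'\le n$ and $n'\equiv n\pmod 5$. This follows directly from the definitions: the condition defining $\mathcal{K}_i(n')$ depends on $n'$ only through its residue mod $5$ and the bound $iL_k\le n'$. Hence $\mathcal{K}_i(n')\subseteq\mathcal{K}_i(n)$, which gives $\kappa_i(n')\le\kappa_i(n)$ for $i=1,2$, and therefore $\kappa(n')\le\kappa(n)$. The $-\infty$ convention and negative-index Lucas numbers do not affect this inclusion argument.

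Combining, $|s|\ge 1/\ph^{\kappa(n')}\ge 1/\ph^{\kappa(n)}$, using $\ph>1$. So $\sigma_5(n)\ge 1/\ph^{\kappa(n)}$, and with Lemma~\ref{lem:ub1} this gives Theorem~\ref{thm:1a}. I expect no real obstacle. The only points needing care are that the rotation trick preserves weight, which holds because multiplying by $\z^j$ permutes the coefficients, and that $s\ne 0$ is automatic since $n'\not\equiv 0$ forces every element of $\Sigma_5^0(n')$ to be nonzero. The latter is also implicit in the hypothesis, since that hypothesis asserts a positive lower bound over all of $\Sigma_5^0(n')$.
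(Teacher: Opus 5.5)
Your proof is correct and follows the paper's argument. You reduce an arbitrary sum to an element of $\Sigma_5^0(n-5c)$ by subtracting the minimal coefficient times $1+\z+\dots+\z^4$; your rotation by a power of $\z$ just makes the paper's ``without loss of generality'' explicit. You then conclude with $\kappa(n-5c)\le\kappa(n)$ and Lemma~\ref{lem:ub1}, and your inclusion $\mathcal{K}_i(n')\subseteq\mathcal{K}_i(n)$ is a clean justification of that monotonicity, which the paper asserts without proof.
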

\begin{proof}
 By Lemma~\ref{lem:ub1}, to prove Theorem~\ref{thm:1a} it suffices to show that $\sigma_5(n)\geq 1/\ph^{\kappa(n)}$ for all $n\geq 1$, $n\not\equiv 0\pmod{5}$.
 Let 
 \[s=a_0+a_1\z+a_2\z^2+a_3\z^3+a_4\z^4\in\Sigma_5(n)\] be arbitrary. To show that $|s|\ge 1/\ph^{\kappa(n)}$ we may assume without loss of generality that $a_0=\min\{a_0,a_1,a_2,a_3,a_4\}$. Hence,
 \[s=(a_1-a_0)\z+(a_2-a_0)\z^2+(a_3-a_0)\z^3+(a_4-a_0)\z^4\in \Sigma_5^0(n-5a_0).\]
 Since $n-5a_0\equiv n\not\equiv 0\pmod{5}$, we have that $\kappa(n-5a_0)\leq \kappa(n)$ and thus
 \[|s|\geq \sigma_5^0(n-5a_0)\geq  \frac{1}{\ph^{\kappa(n-5a_0)}}\geq \frac{1}{\ph^{\kappa(n)}}.\qedhere\]
\end{proof}
   First, we consider the case where $n\equiv L_m\pmod{5}$ and $L_{m}\leq n <2L_{m+1}$. In particular $L_m\leq n\leq 2L_{m+1}-5$. In view of Lemma~\ref{lem:bnxy}, we determine an interval for $b=[n/\alpha]$ that corresponds to the given interval for $n$.
\begin{lem}\label{lem:0a} Let $m\geq 1$ be an integer. If  $L_{m}\leq n\leq 2L_{m+1}-5$, then
  \[F_{m-1}\leq \left[\frac{n}{\alpha}\right]<2F_{m}.\]
\end{lem}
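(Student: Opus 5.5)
Since $x\mapsto [x/\alpha]$ is non-decreasing in $x$, it suffices to check the two endpoints: show $[L_m/\alpha]\geq F_{m-1}$ and $[(2L_{m+1}-5)/\alpha]<2F_m$, i.e.\ $[(2L_{m+1}-5)/\alpha]\leq 2F_m-1$. Throughout I use the identities from \eqref{eq:d2}, namely $L_{k+2}=F_{k+1}\alpha-D_k$ with $D_k=(-1)^k/\ph^{k+1}$. Re-indexed, this reads
\[
  L_{j}=F_{j-1}\alpha-\frac{(-1)^{j}}{\ph^{j-1}},
\]
valid for $j\geq 3$; the remaining small $j$ can be checked directly, or the identity holds in general since both sides satisfy the Fibonacci recurrence.

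\textbf{Lower endpoint.} From the identity,
\[
  \frac{L_m}{\alpha}=F_{m-1}-\frac{(-1)^m}{\alpha\ph^{m-1}}.
\]
The correction term has absolute value at most $1/\alpha<1/2$, so the nearest integer is $[L_m/\alpha]=F_{m-1}$. This is all we need, since we only claim $[L_m/\alpha]\geq F_{m-1}$. For $m=1$ the identity is $L_1=1=F_0\alpha+1$, i.e.\ $1/\alpha\approx0.276$, which rounds to $0=F_0$, consistent.

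\textbf{Upper endpoint.} Write
\[
  \frac{2L_{m+1}-5}{\alpha}=2F_m-\frac{5}{\alpha}-\frac{2(-1)^{m+1}}{\alpha\ph^{m}}.
\]
Since $5/\alpha\approx 1.382$ and $2/(\alpha\ph^m)\leq 2/(\alpha\ph)\approx 0.342$ for $m\geq1$, the quantity lies in $[2F_m-1.724,\;2F_m-1.04]$. Its nearest integer is therefore at most $2F_m-1$, strictly below $2F_m$.

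\textbf{Obstacle.} The only delicate point is ensuring the constants leave margin from the half-integers, which they do. For the smallest $m$ (for instance $m=1$, where $2L_2-5=1$), one checks directly: $[1/\alpha]=0<2$. Note also that the interval $[L_m,2L_{m+1}-5]$ may be empty for very small $m$, in which case the statement is vacuous.
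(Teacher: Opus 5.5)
Your proof is correct and is essentially the paper's argument. Both use the identity from (\ref{eq:d2}) to write $L_m/\alpha = F_{m-1}+(-1)^{m-1}/(\alpha\ph^{m-1})$ and $(2L_{m+1}-5)/\alpha = 2F_m + 2(-1)^m/(\alpha\ph^m) - 5/\alpha$, then compare with $F_{m-1}-\tfrac12$ and $2F_m-\tfrac12$; the only cosmetic difference is that you invoke monotonicity of rounding at the endpoints rather than bounding $n/\alpha$ directly (and, incidentally, the interval is never empty, since $2L_{m+1}-L_m=5F_m\geq 5$).
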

\begin{proof}
  From Equation (\ref{eq:d2}),
  \[\frac{L_m}{\alpha}=F_{m-1}+\frac{(-1)^{m-1}}{\ph^{m-1}\alpha}.\]
  Since $n\geq L_{m}$, we have
  \[\frac{n}{\alpha}\geq F_{m-1}+\frac{(-1)^{m-1}}{\ph^{m-1}\alpha}>F_{m-1}-\frac{1}{\alpha}>F_{m-1}-\frac{1}{2},\]
  hence $F_{m-1}\leq [n/\alpha]$. On the other hand, from $n\leq  2L_{m+1}-5$ we have
  \begin{align*}
    \frac{n}{\alpha} &\leq 2\frac{L_{m+1}}{\alpha}-\frac{5}{\alpha}\\
    &= 2\left(F_{m}+\frac{(-1)^{m}}{\ph^{m}\alpha}\right)-\frac{5}{\alpha}\\
    &<2F_{m}-\frac{1}{2}.
  \end{align*}
  Therefore, $[n/\alpha]<2F_{m}$.
\end{proof}
The following result can be used to characterize the real part of the sums of weight $n$ with small absolute value.
\begin{lem}\label{lem:1a} Let $m\geq 7$ and let $b$ be an integer satisfying $F_{m-1}\leq b<2F_{m}$. If $[b\alpha]\equiv L_{m}\pmod{5}$ and
  \[\|b\alpha\|\leq \frac{2}{\ph^{m-2}},\]
  then $b=F_{m-1}$.
\end{lem}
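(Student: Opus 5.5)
The plan is to pin down the Ostrowski (Zeckendorf) representation of $b$ tightly enough that only finitely many shapes remain, and then eliminate all of them except $b=F_{m-1}$ using the congruence modulo $5$ together with the exact values of the differences $D_k$.

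\textbf{Step 1: restrict the indices.} By Lemma~\ref{lem:1}, write $b=\sum_{k=r}^{R}c_kF_k$ with $c_k\in\{0,1\}$, no two consecutive indices used, and $c_r=1$. Lemma~\ref{lem:2} gives $\|b\alpha\|>1/\ph^{r+1}$. Since $2<\ph^2$, the hypothesis gives $\|b\alpha\|\le 2/\ph^{m-2}<1/\ph^{m-4}$. Hence $r+1>m-4$, that is, $r\ge m-3$.

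For the top index, note $b<2F_m=F_{m+1}+F_{m-2}<F_{m+2}$, so $R\le m+1$. If $R=m+1$, then $b-F_{m+1}<F_{m-2}$, which forces every further index to be at most $m-3$. Also $m\ge 7$ gives $r\ge m-3\ge 4\ge 3$. So Lemma~\ref{lem:1} applies and yields
\[[b\alpha]=\sum_k c_kL_{k+1}.\]
Moreover, $\|b\alpha\|$ equals $\bigl|\sum_k c_kD_{k-1}\bigr|$, where $D_{k-1}=(-1)^{k-1}/\ph^{k}$ by Theorem~\ref{thm:RS1} and (\ref{eq:d2}).

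\textbf{Step 2: enumerate the candidates.} The admissible index sets are the non-consecutive subsets of $\{m-3,\dots,m+1\}$ with the above restriction when $m+1$ occurs, and with sum at least $F_{m-1}$. This is a short explicit list:
\[\{m-1\},\ \{m\},\ \{m+1\},\ \{m-3,m-1\},\ \{m-2,m\},\ \{m-3,m\},\ \{m-3,m-1,m+1\}\ (\text{excluded},\ \text{sum}\ge 2F_m),\ \{m-3,m+1\},\ \ldots\]
I would write the list out fully, discarding sets whose sum falls outside $[F_{m-1},2F_m)$.

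\textbf{Step 3: eliminate by congruence, then by size.} The residue of $L_j$ modulo $5$ depends only on $j \bmod 4$, cycling through $2,1,3,4$. In particular, four consecutive Lucas numbers have distinct nonzero residues, and $L_{j-1}+L_{j+1}=5F_j\equiv 0$. I compare $\sum c_kL_{k+1}$ with $L_m$ modulo $5$:
\begin{itemize}
\item Single-index cases: only $k=m-1$ gives $L_m$; $L_{m+1}$ and $L_{m+2}$ are not $\equiv L_m$.
\item The pair $\{m-3,m-1\}$ gives $L_{m-2}+L_m\equiv 0$, so it is excluded.
\item The pair $\{m-2,m\}$ gives $L_{m-1}+L_{m+1}\equiv 0$, so it is excluded.
\item Mixed pairs such as $L_{m-2}+L_{m+1}$ and $L_{m-2}+L_{m+2}$ are reduced using the residue table, which is determined by $m\bmod 4$.
\end{itemize}
For any case that survives the congruence test, I compute $\|b\alpha\|=\bigl|\sum c_k(-1)^{k-1}\ph^{-k}\bigr|$ exactly. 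The goal is to show it exceeds $2/\ph^{m-2}$, using identities such as $\ph^{-j}-\ph^{-j-2}=\ph^{-j-1}$ and that the smallest term $\ph^{-(m-3)}$ dominates when $r=m-3$.

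\textbf{Main obstacle.} The difficulty is purely combinatorial bookkeeping: getting the candidate list right and handling the residues uniformly in $m \bmod 4$. I would organise this by writing every residue relative to $L_m$ using $L_{j+1}\equiv 3L_j\pmod 5$, which follows from the cycle $2\to1\to3\to4$, that is, multiplication by $3$. Then each candidate's residue becomes $L_m$ times a fixed constant, independent of $m$. The case analysis is thus a finite, $m$-independent check, with the analytic bound from Step 3 only needed for the few residue-compatible sets.
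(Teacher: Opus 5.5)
\textbf{Gap in Step 1.} From $1/\ph^{r+1}<\|b\alpha\|\le 2/\ph^{m-2}<1/\ph^{m-4}$ you only get $r+1>m-4$, i.e.\ $r\ge m-4$. Your ``that is, $r\ge m-3$'' is off by one. This is not just bookkeeping, because the size hypothesis really does allow lowest index $m-4$. Take $b=F_{m-1}+F_{m-4}$, which lies in $[F_{m-1},2F_m)$. It has $\|b\alpha\|=1/\ph^{m-4}-1/\ph^{m-1}=(\ph^3-1)/\ph^{m-1}=2/\ph^{m-2}$ exactly. For $m=7$ this is $\|10\alpha\|=0.1803\ldots=2/\ph^5$. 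So no argument using only Lemma~\ref{lem:2} and the size bound can force $r\ge m-3$.

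\textbf{Consequence: a missed case.} Your Step~2 list omits the four admissible representations with lowest index $m-4$:
\begin{itemize}
\item $F_{m-1}+F_{m-4}$,
\item $F_m+F_{m-4}$,
\item $F_{m+1}+F_{m-4}$,
\item $F_m+F_{m-2}+F_{m-4}$.
\end{itemize}
Your residue rule $L_{j+1}\equiv 3L_j \pmod 5$ disposes of three of them. Indeed $L_m+L_{m-3}\equiv 4L_m$, $L_{m+2}+L_{m-3}\equiv 2L_m$, and $L_{m+1}+L_{m-1}+L_{m-3}\equiv 3L_m$.

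But $b=F_m+F_{m-4}$ gives $[b\alpha]=L_{m+1}+L_{m-3}\equiv 6L_m\equiv L_m\pmod 5$. So it survives the congruence test; for $m=7$ this is $b=15$ with $[15\alpha]=54\equiv L_7\pmod 5$. It must be excluded by size. The differences $D_{m-1}$ and $D_{m-5}$ have the same sign, so
\[\|b\alpha\|=\frac{1}{\ph^{m}}+\frac{1}{\ph^{m-4}}=\frac{\ph^4+1}{\ph^{m}}>\frac{2\ph^2}{\ph^m}=\frac{2}{\ph^{m-2}}.\]

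With your incorrect restriction, only $F_{m-1}$ passes the congruence, so your Step~3 estimate is never actually used. That is a symptom of exactly this missing case. If you correct Step~1 to $r\ge m-4$, enlarge the list, and add this one computation, your argument is complete and coincides with the paper's proof.
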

\begin{proof}
  If $b=\sum_{k=r}^{R}c_{k}F_{k}$ is the Ostrowski representation of $b$, then Lemma~\ref{lem:2} implies $2/\ph^{m-2}>1/\ph^{r+1}$. From which we conclude that $r\geq m-4$. Since $2F_{m}=F_{m+1}+F_{m-2}$, by Lemma~\ref{lem:1}, the only possible Ostrowski representations of $b$ are the following:
  \begin{center}
  \begin{tabular}{rlll}
     $b =$ & $F_{m-1}$,                 & $F_{m}$,                 & $F_{m+1}$,\\
           & $F_{m-1}+F_{m-4}$,         & $F_{m}+F_{m-4}$,         & $F_{m+1}+F_{m-4}$,\\
           & $F_{m-1}+F_{m-3}$,         & $F_{m}+F_{m-3}$,         & $F_{m+1}+F_{m-3}$.\\
           &                            & $F_{m}+F_{m-2}$,         &\\
           &                            & $F_{m}+F_{m-2}+F_{m-4}$, &
  \end{tabular}
  \end{center}
  By assumption $r\geq m-4\geq 3$, and Lemma~\ref{lem:1} also implies that $[b\alpha]$ is obtained by replacing each term $F_{k}$ with $L_{k+1}$ in the list above. Using the fact that $L_{i}+L_{j}\equiv 0 \pmod{5}$ if and only if $i-j\equiv 2\pmod{4}$ and that $L_{m}\equiv 2L_{m+1}\equiv L_{m+4}\pmod{5}$, we find that the only values of $b$ above for which $[b\alpha]\equiv L_{m}\pmod{5}$ are
  \begin{equation*}
  b=F_{m-1}, F_{m}+F_{m-4}.
  \end{equation*}
  If $b=F_{m}+F_{m-4}$, then from Equation (\ref{eq:d2})
  \[\|b\alpha\|=|(L_{m+1}+L_{m-3})-(F_{m}+F_{m-4})\alpha|=\frac{1}{\ph^{m}}+\frac{1}{\ph^{m-4}}=\frac{\ph^4+1}{\ph^{m}}>\frac{2}{\ph^{m-2}}.\]
 Hence, $b=F_{m-1}$.
\end{proof}
\begin{prop}\label{prop:1b}
  Let $m\geq 7$, and let $n$ be an integer with $L_{m}\leq n<2L_{m+1}$ and $n\equiv L_{m}\pmod{5}$. Then all weight $n$ sums of the form
  \[s=a_1\z+a_2\z^2+a_3\z^3+a_4\z^4\in\Sigma_5^0(n)\]
  satisfy
  \[|s|\geq \frac{1}{\ph^{m-1}}.\]
  This bound is met with equality if and only if $n=L_m$ and $s$ or $\overline{s}$ is equal to
  \[F_{m}\z+F_{m-1}\z^3+F_{m-1}\z^4.\]
\end{prop}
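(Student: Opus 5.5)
We argue by contradiction-free reduction: assume $|s|\leq 1/\ph^{m-1}$ and show that this forces $n=L_m$ and $s$ (or $\overline{s}$) to be the stated sum, with $|s|=1/\ph^{m-1}$ exactly. This gives the bound and the equality characterization together.

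\emph{Step 1: the real part.} Since $m\geq 7$, we have $|s|\leq 1/\ph^{m-1}\leq 1/\ph^4$. Lemma~\ref{lem:bnxy} then gives $n=[b\alpha]$ and $b=[n/\alpha]$ with $b=a_2+a_3$. The same lemma gives $|\re(s)|=|n-b\alpha|/(2\ph)\leq 1/\ph^{m-1}$, hence $\|b\alpha\|\leq 2/\ph^{m-2}$. Lemma~\ref{lem:0a} places $b$ in $[F_{m-1},2F_m)$, and $[b\alpha]=n\equiv L_m\pmod 5$. So Lemma~\ref{lem:1a} yields $b=F_{m-1}$. By Lemma~\ref{lem:1} (using $m-1\geq 3$), $n=[F_{m-1}\alpha]=L_m$. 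Consequently $a_2+a_3=F_{m-1}$ and $a_1+a_4=L_m-F_{m-1}=F_{m+1}$. By (\ref{eq:d2}), $|\re(s)|=\frac{1}{2\ph}|L_m-F_{m-1}\alpha|=\frac{1}{2\ph^m}$.

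\emph{Step 2: the imaginary part.} Put $x=a_1-a_4$ and $y=a_3-a_2$. Then $x\equiv F_{m+1}$ and $y\equiv F_{m-1}\pmod 2$, and $|y|\leq F_{m-1}$. Replacing $s$ by $\overline{s}$ swaps $a_1\leftrightarrow a_4$ and $a_2\leftrightarrow a_3$, which negates $x$ and $y$; so we may assume $x\geq 0$.
\begin{itemize}
\item[(a)] $x=y=0$ is impossible by parity, since $\gcd(F_{m+1},F_{m-1})=1$ means they are not both even. This case matters: it would give $|s|=1/(2\ph^m)$, below the bound.
\item[(b)] If $x=0$ and $y\neq 0$, then $|x\ph-y|\geq 1$, and $|s|$ is far larger than the bound.
\item[(c)] If $x>0$, Lemma~\ref{lem:bnxy} gives $y=[x\ph]$, so $x\leq (F_{m-1}+\tfrac12)/\ph<F_{m-1}$. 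The best-approximation property of convergents of $\ph$ (equivalently Lemma~\ref{lem:RS1} and Lemma~\ref{lem:2} applied to the Ostrowski representation of $x$) gives $|x\ph-y|\geq |D_{m-3}|=1/\ph^{m-2}$ for $0<x<F_{m-1}$. Equality holds only for $x=F_{m-2}$, $y=F_{m-1}$.
\end{itemize}

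\emph{Step 3: combine and identify the extremal sum.} In case (c),
\[
|s|^2\geq \frac{1}{4\ph^{2m}}+\frac{4-\ph^2}{4\ph^{2m-4}}.
\]
Multiplying by $4\ph^{2m}$ and using $\ph^2=\ph+1$ and $\ph^4=3\ph+2$, the right-hand side becomes $1+(3-\ph)(3\ph+2)=4\ph+4=4\ph^2$. So the right-hand side equals exactly $1/\ph^{2m-2}$, which proves the bound. In the equality case, $x=F_{m-2}$ and $y=F_{m-1}$ together with $a_1+a_4=F_{m+1}$ and $a_2+a_3=F_{m-1}$ give $a_1=F_m$, $a_4=F_{m-1}$, $a_3=F_{m-1}$, $a_2=0$. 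This is the stated sum, up to conjugation.

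The main obstacle is Step 2(c): one must justify cleanly both the lower bound $1/\ph^{m-2}$ and the uniqueness of the minimizer among $0<x<F_{m-1}$. The derivation of the range $x<F_{m-1}$ from the constraint $|y|\leq F_{m-1}$ must also be checked carefully. I would first try invoking the classical best-approximation theorem for the convergents $F_{k+2}/F_{k+1}$ directly. If that is not available in the required form, I would fall back on the Ostrowski bound of Lemma~\ref{lem:2} with $r\leq m-3$, and check the boundary representation $x=F_{m-2}$ by hand.
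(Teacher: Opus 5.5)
Your proof is correct and follows the paper's argument. The real part is handled identically via Lemmas~\ref{lem:bnxy}, \ref{lem:0a} and \ref{lem:1a}, and the case $x=0$ by the same parity observation.

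The only difference is that you run the imaginary-part step in the opposite direction:
\begin{itemize}
\item You use $y\le a_2+a_3=F_{m-1}$ to cap $x<F_{m-1}$. The best-approximation bound (equivalently Lemma~\ref{lem:2}) then gives $\|x\ph\|\ge 1/\ph^{m-2}$, with equality only at $x=F_{m-2}$.
\item The paper goes from $\|x\ph\|\le 1/\ph^{m-2}$ via Lemma~\ref{lem:2} to $x\ge F_{m-2}$, hence $y\ge F_{m-1}$, and the same cap then forces $a_2=0$.
\end{itemize}
Both rest on the identity $\frac{1}{4\ph^{2m}}+\frac{4-\ph^2}{4\ph^{2m-4}}=\frac{1}{\ph^{2m-2}}$, which you verify explicitly.
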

\begin{proof}
  We can assume $|s|\leq 1/\ph^{m-1}<1/\ph^4$. Applying Lemma~\ref{lem:bnxy}, we have 
  \[|\re(s)|=\frac{1}{2\ph}|n-b\alpha|,\]
  where $b=a_2+a_3$ satisfies $b=[n/\alpha]$ and $[b\alpha]=n\equiv L_{m}\pmod{5}$; and
    \[|\im(s)|^2=\frac{4-\ph^2}{4}|x\ph-y|^2,\]
  where $x=a_1-a_4$, and $y=a_3-a_2$ satisfy $y=[x\ph]$ and $x=[y/\ph]$. 
  Combining $n\equiv L_{m}\pmod{5}$ with $L_{m}\leq n<2L_{m+1}$ gives $L_{m}\leq n\leq 2L_{m+1}-5$, so, applying Lemma~\ref{lem:0a}, we conclude $F_{m-1}\leq b <2F_{m}$. Additionally,
  \[\frac{1}{\ph^{m-1}}\geq |s|\geq \frac{1}{2\ph}|n-b\alpha|= \frac{1}{2\ph}\|b\alpha\|,\]
  implies $\|b\alpha\|\leq 2/\ph^{m-2}$. So, by Lemma~\ref{lem:1a}, we find $b=F_{m-1}$, from which $n=[F_{m-1}\alpha]=L_m$. Thus,
  \[|\re(s)|
  =\frac{1}{2\ph}|L_m-F_{m-1}\alpha|=\frac{1}{2\ph^m}.\]
  Since $b=a_2+a_3=F_{m-1}$ and $a_1+a_4=L_{m}-F_{m-1}=F_{m+1}$ cannot both be even numbers, $x=0$ implies $y\neq 0$. If $x=0$, then for $m\geq 3$
  \[|\im(s)|^2\geq \frac{4-\ph^2}{4}>\frac{1}{\ph^{2m-2}}.\]
  So we assume, without loss of generality, that $x>0$. Now,
  \[\frac{1}{\ph^{2m-2}}\geq |s|^2=\frac{1}{4\ph^{2m}}+\frac{4-\ph^2}{4}|x\ph-y|^2,\]
  implies $\|x\ph\|=|x\ph-y|\leq 1/\ph^{m-2}$. Let $x=\sum_{k=r}^R c_{k}F_{k}$ be the Ostrowski representation of $x$. Then, Lemma \ref{lem:2} gives $1/\ph^{m-2}\geq \|x\ph\|>1/\ph^{r+1}$. Hence, $r\geq m-2$, and, in particular, $x\geq F_{m-2}$. Therefore, $y\geq [F_{m-2}\ph]=F_{m-1}$ for $m\geq 4$. Combining $a_3+a_2=F_{m-1}$ with $y=a_3-a_2\geq F_{m-1}$, we find $a_2=0$ and $a_3=F_{m-1}$. This implies $y=F_{m-1}$, so $x=[F_{m-1}/\ph]=F_{m-2}$, for $m\ge 3$. From $a_1+a_4=F_{m+1}$ and $x=a_1-a_4=F_{m-2}$, we find $a_1=F_{m}$ and $a_{4}=F_{m-1}$. Therefore, the sum $s$ or its conjugate must be equal to $F_{m}\z+F_{m-1}\z^3+F_{m-1}\z^4$, in which case 
  \begin{align*}
    |s|&=|F_{m}+F_{m-1}\z^2+F_{m-1}\z^3|\\
    &=|F_{m}- F_{m-1}\ph|\\
    &=\frac{1}{\ph^{m-1}}.\qedhere
  \end{align*}
\end{proof}
Now we consider those weights $n$ satisfying $2L_m \leq n <L_{m+3}$ for some $m$.
\begin{lem}\label{lem:0a1} Let $m\geq 1$ be an integer. If $2L_{m}\leq n \leq L_{m+3}-5$, then
  \[2F_{m-1}\leq \left[\frac{n}{\alpha}\right]<F_{m+2}.\]
\end{lem}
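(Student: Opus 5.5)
The plan mirrors the proof of Lemma~\ref{lem:0a}: divide the endpoints $2L_m$ and $L_{m+3}-5$ by $\alpha$, express each quotient as an integer plus a small error using Equation~(\ref{eq:d2}), and then round.

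For the lower bound, Equation~(\ref{eq:d2}) with index $k=m-2$ gives $F_{m-1}\alpha-L_m=(-1)^{m-2}/\ph^{m-1}$. Hence
\[
\frac{L_m}{\alpha}=F_{m-1}+\frac{(-1)^{m-1}}{\ph^{m-1}\alpha},
\]
exactly as in the proof of Lemma~\ref{lem:0a}. Since $n\geq 2L_m$, we get
\[
\frac{n}{\alpha}\geq 2F_{m-1}-\frac{2}{\ph^{m-1}\alpha}\geq 2F_{m-1}-\frac{2}{\alpha}>2F_{m-1}-\frac12,
\]
using $\ph^{m-1}\geq 1$ for $m\geq 1$ and $\alpha=\ph+2>4$. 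It follows that $[n/\alpha]\geq 2F_{m-1}$.

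For the upper bound, apply Equation~(\ref{eq:d2}) with index $k=m+1$ to get
\[
\frac{L_{m+3}}{\alpha}=F_{m+2}+\frac{(-1)^{m+2}}{\ph^{m+2}\alpha}.
\]
Then $n\leq L_{m+3}-5$ gives
\[
\frac{n}{\alpha}\leq F_{m+2}+\frac{1}{\ph^{m+2}\alpha}-\frac{5}{\alpha}\leq F_{m+2}-\frac{4}{\alpha}.
\]
Since $4/\alpha=4/(\ph+2)\approx 1.105>1/2$, this yields $n/\alpha<F_{m+2}-\tfrac12$, and therefore $[n/\alpha]<F_{m+2}$. Here $[\cdot]$ denotes the nearest integer, which is well defined because $\alpha$ is irrational.

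The main point needing care is the indexing in Equation~(\ref{eq:d2}): one must check that $F_{m-1}$ and $F_{m+2}$ really pair with $L_m$ and $L_{m+3}$ (that is, $D_k=F_{k+1}\alpha-L_{k+2}$ with $k=m-2$ and $k=m+1$), and that the sign of the error term is harmless in each direction. Once the error terms are bounded crudely by $1/\alpha$ (or $2/\alpha$ on the lower side), both estimates are immediate for all $m\geq 1$, so no small-$m$ case analysis is needed. For $m=1$ the lower-bound argument uses $F_0=0$ and $k=-1$, which Equation~(\ref{eq:d2}) covers since it holds for $k\geq -1$.
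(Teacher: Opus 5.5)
Your strategy is the one the paper intends (its proof only says ``analogous to the proof of Lemma~\ref{lem:0a}''), and your upper-bound half is correct. The lower-bound half, however, rests on a false numerical claim. In fact $\alpha=\ph+2=(5+\sqrt{5})/2\approx 3.618$, not $>4$; your own later estimate $4/\alpha\approx 1.105$ is consistent with $\alpha\approx 3.618$. Consequently $2/\alpha=1-1/\sqrt{5}\approx 0.553>\tfrac12$, so the step $2F_{m-1}-2/\alpha>2F_{m-1}-\tfrac12$ is false. Once the error term is doubled, the crude bound $\ph^{m-1}\geq 1$ is genuinely too lossy. This is exactly where the argument of Lemma~\ref{lem:0a} does not transfer verbatim: there the error is only $1/\alpha<\tfrac12$. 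So your closing remark that no sign or small-$m$ analysis is needed is wrong.

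The repair is short: keep the sign of the error term $2(-1)^{m-1}/(\ph^{m-1}\alpha)$ in $2L_m/\alpha$. If $m$ is odd, this term is positive, so $n/\alpha\geq 2L_m/\alpha>2F_{m-1}$. If $m$ is even, then $m\geq 2$, so $\ph^{m-1}\geq\ph$, and the error is at least $-2/(\ph\alpha)=-2/(3\ph+1)\approx -0.342>-\tfrac12$. Equivalently, $m=1$ is trivial because $2F_0=0$, and for $m\geq 2$ you can use $\ph^{m-1}\geq\ph$. In every case $n/\alpha>2F_{m-1}-\tfrac12$, which gives $[n/\alpha]\geq 2F_{m-1}$. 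With this fix, together with your correct upper bound, the proof is complete.
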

\begin{proof} Analogous to the proof of Lemma~\ref{lem:0a}.
\end{proof}

\begin{lem}\label{lem:2a}
  Let $m\geq 6$, and let $b$ be an integer satisfying $2F_{m-1}\leq b<F_{m+2}$. If $[b\alpha]\equiv 2L_{m}\pmod{5}$ and
  \[\|b\alpha\|\leq \frac{2}{\ph^{m-1}},\]
  then $b=F_{m}+F_{m-3}$.
\end{lem}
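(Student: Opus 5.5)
The plan mirrors the proof of Lemma~\ref{lem:1a}. First, the smallness of $\|b\alpha\|$ restricts the Ostrowski representation of $b$ to a few large Fibonacci numbers. Second, the congruence condition on $[b\alpha]$ removes all but a handful of candidates. Third, a direct evaluation of $\|b\alpha\|$ through (\ref{eq:d2}) removes the rest.

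\emph{Step 1: bound the lowest index.} Let $b=\sum_{k=r}^R c_kF_k$ be the Ostrowski representation of Lemma~\ref{lem:1}. Lemma~\ref{lem:2} gives $2/\ph^{m-1}\geq\|b\alpha\|>1/\ph^{r+1}$, so $\ph^{r+1}>\ph^{m-1}/2$. Since $\ph<2<\ph^2$, this forces $r+1\geq m-2$, that is, $r\geq m-3$. For $m\geq 6$ we then have $r\geq 3$. Hence Lemma~\ref{lem:1} applies, and $[b\alpha]$ is obtained by replacing each $F_k$ by $L_{k+1}$.

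\emph{Step 2: list the candidates.} We have $b<F_{m+2}$, and the Ostrowski representation of $F_{m+2}$ is a single term. So every index is at most $m+1$, and every index is at least $m-3$. The lower bound $2F_{m-1}=F_m+F_{m-3}$ excludes $F_m$ alone, $F_{m-1}+F_{m-3}$ and everything smaller. The sum $F_{m+1}+F_{m-1}=F_{m+2}$ is too large. This leaves only
\[
b\in\{F_m+F_{m-3},\ F_m+F_{m-2},\ F_{m+1},\ F_{m+1}+F_{m-3},\ F_{m+1}+F_{m-2}\}.
\]
Their images are, respectively,
\[
[b\alpha]=L_{m+1}+L_{m-2},\ L_{m+1}+L_{m-1},\ L_{m+2},\ L_{m+2}+L_{m-2},\ L_{m+2}+L_{m-1}.
\]

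\emph{Step 3: apply the congruence.} Since $2L_m\equiv L_{m-1}\pmod 5$, I would check each of these against the table of $L_j \bmod 5$ according to $j\bmod 4$.
\begin{itemize}
\item The first one works. In fact it is an exact identity: $L_{m+1}+L_{m-2}=L_m+L_{m-1}+L_{m-2}=2L_m$.
\item The second one vanishes modulo $5$, because $L_{m+1}+L_{m-1}=5F_m$.
\item $L_{m+2}\equiv L_{(m-1)+3}\not\equiv L_{m-1}$, since the four residue classes are distinct.
\item $L_{m+2}+L_{m-2}\equiv 2L_{m+2}\equiv L_{m+1}\not\equiv L_{m-1}$.
\item For $L_{m+2}+L_{m-1}$, write $j=m-1$ and check the four classes of $j\bmod 4$ directly. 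For example, $j\equiv0$ gives $4+2\equiv1\neq2$, and the other three classes behave the same way.
\end{itemize}

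\emph{Step 4: compute any survivors.} If any candidate other than $F_m+F_{m-3}$ survived Step 3, I would compute $\|b\alpha\|=|\sum c_kD_{k-1}|$ with $D_{k-1}=(-1)^{k-1}/\ph^{k}$, and show that it exceeds $2/\ph^{m-1}$. This is exactly how the case $F_m+F_{m-4}$ was discarded in Lemma~\ref{lem:1a}. The outcome is $b=F_m+F_{m-3}$.

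\emph{Main obstacle.} The only delicate point is the bookkeeping in the residue check of Step 3, together with making sure the candidate list is complete at the boundaries $2F_{m-1}$ and $F_{m+2}$. If the congruence check turns out to be less clean than expected, Step 4 serves as the fallback.
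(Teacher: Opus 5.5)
Your strategy is the same as the paper's, and Steps 1 and 3 are correct for the candidates you list. The gap is in Step 2.

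\textbf{The error.} The identity $F_{m+1}+F_{m-1}=F_{m+2}$ is false. In fact $F_{m+1}+F_{m-1}=L_m$, while $F_{m+2}=F_{m+1}+F_m>L_m$. So $F_{m+1}+F_{m-1}$ lies in the range, and it is a valid Ostrowski representation, since its indices differ by $2$. Your list therefore misses two admissible values of $b$ with all indices in $[m-3,m+1]$ and $2F_{m-1}\le b<F_{m+2}$:
\[
b=F_{m+1}+F_{m-1}\qquad\text{and}\qquad b=F_{m+1}+F_{m-1}+F_{m-3}.
\]
For $m=6$ these are $b=18$ and $b=20$, both in $[10,21)$. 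As written, the enumeration is incomplete and the conclusion does not yet follow. This is exactly the upper-boundary issue you identified as the main obstacle.

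\textbf{The repair.} The fix stays within your method. By Lemma~\ref{lem:1}, the two missing candidates give
\[
[b\alpha]=L_{m+2}+L_m=5F_{m+1}\equiv 0 \quad\text{and}\quad [b\alpha]=L_{m+2}+L_m+L_{m-2}\equiv L_{m-2} \pmod 5 .
\]
Neither is congruent to $2L_m\equiv L_{m-1}\not\equiv 0$. For the second, note that Lucas numbers with consecutive indices have distinct residues mod $5$.

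Your Step 4 fallback would not rescue the first candidate. Indeed,
\[
\|(F_{m+1}+F_{m-1})\alpha\|=\frac{1+\ph^2}{\ph^{m+1}}<\frac{2\ph^2}{\ph^{m+1}}=\frac{2}{\ph^{m-1}},
\]
so the size bound alone does not exclude it, and the congruence is genuinely needed. With these two cases added, all seven candidates are accounted for and $b=F_m+F_{m-3}$ follows.
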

\begin{proof}
The proof is analogous to the one of Lemma~\ref{lem:1a}. From Lemma~\ref{lem:2}, we have $2/\ph^{m-1}\geq\|b\alpha\|>1/\ph^{r+1}$. So, $r\geq m-3$, and when $m\geq 6$ the only possible value of $b$ satisfying $\|b\alpha\|\leq 2/\ph^{m-1}$ and $[b\alpha]\equiv 2L_{m}\pmod{5}$ is $b=F_{m}+F_{m-3}$.
\end{proof}

\begin{prop}\label{prop:2b} Let $m\geq 6$, and let $n$ be an integer satisfying $2L_{m}\leq n<L_{m+3}$ and $n\equiv 2L_{m}\pmod{5}$. If
  \[s=a_1\z+a_2\z^2+a_3\z^3+a_4\z^4\in\Sigma_5^0(n)\]
  is a weight $n$ sum, then
  \[|s|\geq \frac{1}{\ph^m}.\]
  Furthermore, this bound is met with equality if and only if $n=2L_m$ and
  \[s = F_{m+1}\z + F_{m-1}\z^2 + F_{m-1}\z^3 + F_{m+1}\z^4.\]
\end{prop}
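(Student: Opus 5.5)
The plan is to mirror the proof of Proposition~\ref{prop:1b}. Assume $|s|\le 1/\ph^{m}<1/\ph^4$, since otherwise there is nothing to prove. Lemma~\ref{lem:bnxy} then gives that $b=a_2+a_3$ satisfies $b=[n/\alpha]$ and $n=[b\alpha]$. It also gives that $x=a_1-a_4$ and $y=a_3-a_2$ satisfy $y=[x\ph]$ and $x=[y/\ph]$.

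\textbf{Real part.} The congruence and the interval give $2L_m\le n\le L_{m+3}-5$, so Lemma~\ref{lem:0a1} yields $2F_{m-1}\le b<F_{m+2}$. From
\[
\frac{1}{\ph^m}\ge|\re(s)|=\frac{1}{2\ph}\|b\alpha\|
\]
we get $\|b\alpha\|\le 2/\ph^{m-1}$. Lemma~\ref{lem:2a} then forces $b=F_m+F_{m-3}$. Here $r=m-3\ge3$, so Lemma~\ref{lem:1} gives
\[
n=[b\alpha]=L_{m+1}+L_{m-2}=2L_m,
\]
using $L_{m+1}+L_{m-2}=L_m+L_{m-1}+L_{m-2}=2L_m$. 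By \eqref{eq:d2},
\[
|\re(s)|=\frac{1}{2\ph}\left|\frac{(-1)^{m-1}}{\ph^{m}}+\frac{(-1)^{m-4}}{\ph^{m-3}}\right|=\frac{1}{2\ph}\cdot\frac{\ph^3-1}{\ph^m}=\frac{1}{\ph^{m}}\cdot\frac{\ph^3-1}{2\ph}=\frac{1}{\ph^m}\cdot\frac{2\ph}{2\ph}=\frac{1}{\ph^m},
\]
where the last step uses $\ph^3-1=2\ph$. So the real part alone already gives $|s|\ge 1/\ph^m$. This proves the inequality for every admissible $n$: either $|s|>1/\ph^m$ from the outset, or the argument above applies.

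\textbf{Equality.} Suppose $|s|=1/\ph^m$. Then $n=2L_m$, and since $|\re(s)|=1/\ph^m$ we must have $\im(s)=0$. Hence $x\ph=y$, and as $\ph$ is irrational, $x=y=0$. So $a_1=a_4$ and $a_2=a_3$. With $a_2+a_3=b=F_m+F_{m-3}=2F_{m-1}$ we get $a_2=a_3=F_{m-1}$. With $a_1+a_4=n-b=2L_m-2F_{m-1}=2F_{m+1}$ we get $a_1=a_4=F_{m+1}$.

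Conversely, this $s$ has modulus $1/\ph^m$ by the computation in Lemma~\ref{lem:ub1}.

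The main obstacle is Lemma~\ref{lem:2a}, which the paper only sketches. Its case check would go as follows. List the Ostrowski representations of $b\in[2F_{m-1},F_{m+2})$ with least index $r\ge m-3$, map each $F_k$ to $L_{k+1}$ via Lemma~\ref{lem:1}, and discard those with $[b\alpha]\not\equiv 2L_m\equiv L_{m-1}\pmod 5$. Then estimate $\|b\alpha\|$ for the survivors. Here we simply invoke the lemma as stated.

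I also need to confirm the identity $F_m+F_{m-3}=2F_{m-1}$, which holds because $F_m=F_{m-1}+F_{m-2}$ and $F_{m-2}+F_{m-3}=F_{m-1}$.
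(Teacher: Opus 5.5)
Your proposal is correct and follows the paper's proof essentially step for step. You bound the real part via Lemma~\ref{lem:bnxy} and use Lemma~\ref{lem:2a} to force $b=F_m+F_{m-3}=2F_{m-1}$ and $n=2L_m$, which gives $|\re(s)|=1/\ph^m$; the equality case then follows from $\im(s)=0$. You also make explicit two points the paper leaves implicit: the appeal to Lemma~\ref{lem:0a1}, which places $b$ in the range required by Lemma~\ref{lem:2a}, and the converse direction via Lemma~\ref{lem:ub1}.
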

\begin{proof} 
  We can assume
  \[|s|\leq \frac{1}{\ph^{\kappa(n)}}=\frac{1}{\ph^{m}}\leq \frac{1}{\ph^6}.\]
  Lemma~\ref{lem:bnxy} implies that $|\re(s)|=\frac{1}{2\ph}|n-b\alpha|$, where $b=a_2+a_3$ satisfies $n=[b\alpha ]$ and $b=[n/\alpha]$. So,
  \[\frac{1}{\ph^{m}}\geq |s|\geq \frac{1}{2\ph}\|b\alpha\|,\]
  which implies $\|b\alpha\|\leq 2/\ph^{m-1}$. Since $m\geq 6$, Lemma~\ref{lem:2a} implies $b=F_{m}+F_{m-3}=2F_{m-1}$. Thus, $n=[b\alpha]=2L_{m}$ and
  \[|\re(s)|=\frac{1}{\ph}|L_{m}- F_{m-1}\alpha|=\frac{1}{\ph^m}.\]
  From $1/\ph^m\geq |s|\geq |\re(s)|$, it follows that  $\im(s)=0$. Noting that
  \[\im(s)^2=\frac{4-\ph^2}{4}|(a_1-a_4)\ph - (a_3-a_2)|^2=0,\]
  we obtain $a_1=a_4$ and $a_2=a_3$. Hence, $a_2=a_3=F_{m-1}$ and $a_1=a_4=F_{m+1}$.
\end{proof}

\begin{proof}[Proof of Theorem~\ref{thm:1a}] 
  In view of Proposition~\ref{prop:red} it suffices to show that $|s|\geq 1/\ph^{\kappa(n)}$ for all \[s=a_1\z+a_2\z^2+a_3\z^3+a_4\z^4\in\Sigma_5^0(n).\]
  It can be verified computationally (e.g. by backtracking search on $\Sigma_5(n)$) that for all $n\leq \max\{L_{7},2L_6\}=36$ the inequality $\sigma_5(n)\geq 1/\ph^{\kappa(n)}$ holds. Now suppose $n\geq 36$ and that $s$ is a weight $n$ sum. Then $n\equiv L_{m}\pmod{5}$ and $n\in [L_{m},2L_{m+1})$ for some $m\geq 7$, or $n\equiv 2L_{m}\pmod{5}$ and $n\in [2L_{m},L_{m+3})$ for some $m\geq 6$. In the former case, we have that $\kappa(n)=m-1$, and by Proposition~\ref{prop:1b}, $|s|\geq 1/\ph^{m-1}=1/\ph^{\kappa(n)}$. In the latter case, we have that $\kappa(n)=m$ and by Proposition~\ref{prop:2b},  $|s|\geq 1/\ph^{m}=1/\ph^{\kappa(n)}$.
\end{proof}
Using Propositions \ref{prop:1b} and \ref{prop:2b} we can characterize the set of minimal sums. Suppose that $n\geq 36$, $n\in[L_m,2L_{m+1})$, $n\equiv L_m\pmod{5}$, and that $s\in\Sigma_5(n)$ satisfies $|s|=\sigma_5(n)=1/\ph^{m-1}$. After multiplying by an appropriate phase $\z^j$, we may assume
\[\zeta^{j} s=a_1\z+a_2\z^2+a_3\z^3+a_4\z^4\in\Sigma_5^0(n-5f),\]
for some integer $f\geq 0$. Thus, $1/\ph^{m-1}=|\z^js|\geq 1/\ph^{\kappa(n-5f)}$, and since $\kappa(n-5f)\leq \kappa(n)=m-1$, we find $\kappa(n-5f)=\kappa(n)$. Hence, by Proposition~\ref{prop:1b} we conclude that $s$ or $\overline{s}$ 
belongs to the set
\[
\mathcal{M}=
\{\z^h(F_m+F_{m-1}\z^2+F_{m-1}\z^3)
\mid 0\leq h\leq 4\}.\]
Since this set is closed under conjugation, we see that
$s$ actually belongs to $\mathcal{M}$.
Similarly, one can show that when $n\in [2L_m,L_{m+3})$, $n\equiv 2L_m\pmod{5}$, and $s\in\Sigma_5(n)$ satisfies $|s|=\sigma_5(n)=1/\ph^m$,
Proposition~\ref{prop:2b} implies that $s$ belongs to the set
\[\mathcal{M}'=\{\z^h(F_{m+1}\z+F_{m-1}\z^2+F_{m-1}\z^3+F_{m+1}\z^4)
\mid 0\leq h\leq 4\}.\]
An exhaustive search shows that the characterization of minimal sums above also holds true for $1\leq n <36$, with $n\not\equiv 0\pmod{5}$. 

\section{Proof of Theorem~\ref{thm:2a}}\label{sec:thm2}
Our proof of Theorem~\ref{thm:2a} follows the same structure as the proof of Theorem~\ref{thm:1a}. Here, we require the following identity, which holds for $m\geq 0$ and is easily shown by induction 
\begin{equation}\label{eq:1b3} 
  L_{m+1}-\ph L_{m} = (-1)^{m-1}\frac{\sqrt{5}}{\ph^{m}}.
\end{equation}
\begin{lem} \label{lem:ub2} Let $k\geq 1$ be an integer and let $n=5k$. Then
  \[\sigma_5(n)^2\leq \frac{\sqrt{5}}{\ph^{2\nu(k)-1}}.\]
\end{lem}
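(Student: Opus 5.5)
The plan is to construct, for each $k\geq 1$, an explicit sum of weight $5k$ whose squared absolute value equals $\sqrt{5}/\ph^{2\nu(k)-1}$. The construction has two steps. First build a good sum of weight $5F_m$. Then pad it using $\Sigma_5(n)\subseteq\Sigma_5(n+5f)$, which holds because $1+\z+\z^2+\z^3+\z^4=0$. Write $m:=\nu(k)$. Since $F_1=F_2=1\leq k$, we have $m\geq 2$, and $F_m\leq k$ by definition of $\nu(k)$. So it suffices to exhibit a sum $s\in\Sigma_5(5F_m)$ with $|s|^2=\sqrt{5}/\ph^{2m-1}$. Adding $k-F_m\geq 0$ copies of $1+\z+\z^2+\z^3+\z^4$ to $s$ then gives a sum in $\Sigma_5(5k)$ with the same absolute value.

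The key idea is to factor the candidate as $s=(1-\z)\,t$. Here $t$ is the real minimal sum already used in Lemma~\ref{lem:ub1}, namely
\[t=F_m+F_{m-1}\z^2+F_{m-1}\z^3.\]
By $\z^2+\z^3=-\ph$ and (\ref{eq:d2}), $|t|=|F_m-F_{m-1}\ph|=1/\ph^{m-1}$. Also
\[|1-\z|^2=2-(\z+\z^4)=2-\frac{1}{\ph}=\frac{2\ph-1}{\ph}=\frac{\sqrt{5}}{\ph}.\]
Hence $|s|^2=\sqrt{5}/\ph^{2m-1}$, exactly the target value.

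It remains to check that $s$ is a genuine sum of weight $5F_m$ with non-negative coefficients. Expanding,
\[(1-\z)t=F_m-F_m\z+F_{m-1}\z^2-F_{m-1}\z^4 .\]
This has some negative coefficients. We fix that by adding $F_m(1+\z+\z^2+\z^3+\z^4)=0$, which gives
\[s=2F_m+(F_m+F_{m-1})\z^2+F_m\z^3+(F_m-F_{m-1})\z^4 .\]
All coefficients are non-negative, since $F_{m-1}\leq F_m$ for $m\geq 2$. They sum to $5F_m$. Also $s\neq 0$, because $|s|>0$. As a sanity check, for $k=1$ we get $m=2$ and $s=2+2\z^2+\z^3$, which has weight $5$ and $|s|^2=\sqrt5/\ph^3$, matching Table~\ref{tab:1a}.

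The only step requiring any insight is spotting the factorization $s=(1-\z)t$. It reduces the weight-$5F_m$ case to the already-known real minimal sums, with $|1-\z|^2=\sqrt5/\ph$ supplying the factor $\sqrt{5}$. After that, everything is a routine verification that the coefficients are non-negative and that the weight is $5F_m$. Combining the padding step with this construction gives $\sigma_5(5k)^2\leq\sqrt{5}/\ph^{2\nu(k)-1}$.
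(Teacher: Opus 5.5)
Your proof is correct. The element you construct is, up to sign, the paper's extremal sum. The paper uses $2F_m\z+F_{m-2}\z^2+F_m\z^3+F_{m+1}\z^4$, which is $\z\overline{s}$ as a formal sum and equals $(\z-1)t=-s$ as a complex number.

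The difference lies in how $|s|$ is evaluated. The paper computes $\re(s)$ and $\im(s)$ separately and then adds the squares:
\begin{itemize}
\item for the real part it uses the Lucas identity (\ref{eq:1b3});
\item for the imaginary part it regroups the sum and uses $\im(\z)/\im(\z^2)=\ph$.
\end{itemize}
You instead factor $s=(1-\z)t$ through the real minimal sum $t$ of Lemma~\ref{lem:ub1} and use multiplicativity of the absolute value. The only new computation is then $|1-\z|^2=\sqrt5/\ph$. This is shorter, avoids the auxiliary identity, and explains where the factor $\sqrt5$ comes from.

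The factorization is also the natural one. The weight of a sum modulo $5$ is its image under $\Z[\z]\to\Z[\z]/(1-\z)\cong\Z/5\Z$. Hence every sum of weight divisible by $5$ is a multiple of $1-\z$.

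What the paper's route buys is that its real/imaginary split matches the decomposition of Lemma~\ref{lem:bnxy}, which is used in the matching lower bound (Proposition~\ref{prop:1f}). There $|\re(s)|^2=5/(4\ph^{2m})$ is fixed once $b=a_2+a_3=L_{m-1}$, and the imaginary part is then minimized separately.
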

\begin{proof} There is a unique integer $m\geq 2$ such that $n$ lies in the interval $[5F_m,5F_{m+1})$, in which case $\nu(k)=m$. Write $n=5f+5F_m$ for some non-negative integer $f$ and consider the sum
    \begin{align*}
    s&=f(1+\z+\z^2+\z^3+\z^4)+2F_{m}\zeta+F_{m-2}\zeta^2+F_{m}\zeta^3+F_{m+1}\zeta^4\in\Sigma_5^0(n).
    \end{align*}
    Notice that
  \begin{align*}
    2\re(s)
    &=\frac{2F_{m}+F_{m+1}}{\ph} -(F_{m-2}+F_{m})\ph\\
    &=\frac{1}{\ph}\left(2F_{m}+F_{m+1}-(\ph+1)(F_{m-2}+F_{m})\right)\\
    &=\frac{1}{\ph}(L_{m}- L_{m-1}\ph)\\
    &=(-1)^{m}\frac{\sqrt{5}}{\ph^{m}}.\refby{eq:1b3}
  \end{align*}
  Observe also that
  \[s=F_{m+1}(\zeta+\zeta^4)+F_{m-2}(\zeta^2+\zeta^3)+F_{m-2}\zeta + F_{m-1}\zeta^3.\]
  Hence,
  \begin{align*}
 \im(s)&=\im(F_{m-2}\zeta+F_{m-1}\zeta^3)\\
    &=F_{m-2}\im(\zeta) - F_{m-1}\im(\zeta^2)\\
    &=\frac{\im(\zeta)}{\ph}\left(F_{m-2}\ph-F_{m-1}\right)\refby{eq:ratio}\\
    &=(-1)^{m-1}\frac{\sqrt{4\ph^2-1}}{2\ph^{m}}.
  \end{align*}
  Therefore,
  \[
    \sigma_5(n)^2\leq |s|^2
    = \frac{5}{4\ph^{2m}}+\frac{4\ph^2-1}{4\ph^{2m}}=
     \frac{\sqrt{5}}{\ph^{2m-1}}=\frac{\sqrt{5}}{\ph^{2\nu(k)-1}}.\qedhere
  \]
\end{proof}

\begin{prop}\label{prop:red1}
    The inequality
    \[\sigma_5^0(5k)^2\geq \frac{\sqrt{5}}{\ph^{2\nu(k)-1}}\]
    for all integers $k\geq 1$, implies Theorem~\ref{thm:2a}.
\end{prop}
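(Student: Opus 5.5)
By Lemma~\ref{lem:ub2} we already have $\sigma_5(5k)^2\leq \sqrt{5}/\ph^{2\nu(k)-1}$. It therefore suffices to show the reverse inequality $|s|^2\geq \sqrt{5}/\ph^{2\nu(k)-1}$ for every nonzero $s\in\Sigma_5(5k)$. I will mimic the proof of Proposition~\ref{prop:red}.

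Fix an arbitrary nonzero $s=a_0+a_1\z+a_2\z^2+a_3\z^3+a_4\z^4\in\Sigma_5(5k)$. Multiplying $s$ by a power of $\z$ permutes the coefficients cyclically and does not change $|s|$, so we may assume $a_0=\min\{a_0,\dots,a_4\}$. Subtracting $a_0(1+\z+\z^2+\z^3+\z^4)=0$ gives
\[s=\sum_{i=1}^4(a_i-a_0)\z^i\in\Sigma_5^0(5k-5a_0)=\Sigma_5^0(5k'),\qquad k':=k-a_0 .\]

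Suppose first that $k'=0$. Then all $a_i-a_0$ vanish, so $s=0$, which is excluded. Hence $k'\geq 1$. Since $\nu$ is non-decreasing and $k'\leq k$, we have $\nu(k')\leq\nu(k)$. Applying the hypothesis at $k'$ gives
\[|s|^2\geq \sigma_5^0(5k')^2\geq \frac{\sqrt{5}}{\ph^{2\nu(k')-1}}\geq \frac{\sqrt{5}}{\ph^{2\nu(k)-1}},\]
where the last step uses $\ph>1$.

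There is one subtlety to handle. The set $\Sigma_5^0(5k')$ may contain $0$; for example, $\z+\z^4-\z^2-\z^3\neq0$, but some combination could vanish. The hypothesis as written bounds $\sigma_5^0$, which is defined as a minimum over all elements of $\Sigma_5^0(n)$. I will argue that $0\notin\Sigma_5^0(n)$. The elements $\z,\z^2,\z^3,\z^4$ form a basis of $\Q(\z)$ over $\Q$, so $\sum_{i=1}^4 c_i\z^i=0$ forces $c_1=c_2=c_3=c_4=0$, which gives weight $0$. Hence $\sigma_5^0(5k')$ is a minimum over nonzero values and the hypothesis applies.

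Taking the minimum over all nonzero $s$ then gives $\sigma_5(5k)^2\geq\sqrt{5}/\ph^{2\nu(k)-1}$, and combining this with Lemma~\ref{lem:ub2} proves Theorem~\ref{thm:2a}. No step here seems difficult. The only points that need care are the monotonicity of $\nu$ and the nonvanishing of elements of $\Sigma_5^0(n)$ for $n\geq 1$.
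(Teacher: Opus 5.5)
Your proof is correct and is the argument the paper intends; its proof reads only ``analogous to the proof of Proposition~\ref{prop:red}'': normalize so that $a_0$ is minimal, pass to $\Sigma_5^0(5k-5a_0)$, and use that $\nu$ is non-decreasing. Your check that $k'=0$ forces $s=0$ is the one extra remark needed when $n\equiv 0\pmod 5$. Your worry about $0\in\Sigma_5^0(5k')$ is harmless but unnecessary: $\sigma_5^0$ is a minimum over all of $\Sigma_5^0(5k')$, so the hypothesis bounds $|s|$ directly.
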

\begin{proof}
    Analogous to the proof of Proposition~\ref{prop:red}.
\end{proof}

\begin{lem}\label{lem:1d1}
  Let $m\geq 2$ be an integer. If $5F_m\leq n\leq 5F_{m+1}-5$, then
  \[L_{m-1}\leq\left[\frac{n}{\alpha}\right]<L_{m}.\]
\end{lem}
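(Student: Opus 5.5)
We follow the pattern of Lemma~\ref{lem:0a}: express $5F_m/\alpha$ and $5F_{m+1}/\alpha$ as an integer plus a small error. The key identity is
\[5F_m=L_{m+1}+L_{m-1}=(\alpha-2)L_m+L_{m-1}+\text{error},\]
so the first step is to find the exact relation between $5F_m$, $\alpha$ and $L_{m-1}$.

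Since $\alpha=\ph+2$ and $\ph=\sqrt5\,\ph/\sqrt5$, it is cleaner to compute $L_{m-1}\alpha-5F_m$ directly. Using Binet's formulas, $L_j=\ph^j+\psi^j$ and $F_j=(\ph^j-\psi^j)/\sqrt5$ with $\psi=-1/\ph$, together with $\alpha=\ph+2=\sqrt5\,\ph$ (true, since $\ph^2+2\ph=\ph(\ph+2)$, $\sqrt5=2\ph-1$, and $\sqrt5\,\ph=2\ph^2-\ph=\ph+2$), we get
\[L_{m-1}\alpha=\sqrt5\,\ph^m+\sqrt5\,\ph\psi^{m-1}, \qquad 5F_m=\sqrt5\,\ph^m-\sqrt5\,\psi^m.\]
Hence
\[L_{m-1}\alpha-5F_m=\sqrt5\,\psi^{m-1}(\ph+\psi)=\sqrt5\,\psi^{m-1},\]
that is,
\[\frac{5F_m}{\alpha}=L_{m-1}-\frac{(-1)^{m-1}}{\ph^{m}},\]
using $\sqrt5/\alpha=1/\ph$. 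This could alternatively be checked by induction, in the style of \eqref{eq:1b3}.

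For the lower bound: $n\ge 5F_m$ gives $n/\alpha\ge L_{m-1}-1/\ph^m>L_{m-1}-\tfrac12$ for $m\ge2$, since $\ph^2>2$. Hence $[n/\alpha]\ge L_{m-1}$.

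For the upper bound: $n\le 5F_{m+1}-5$ gives
\[\frac{n}{\alpha}\le L_m+\frac{1}{\ph^{m+1}}-\frac{5}{\alpha}.\]
Since $5/\alpha=5/(\ph+2)\approx1.38$ and $1/\ph^{m+1}\le1/\ph^3<0.24$, this is $<L_m-1<L_m-\tfrac12$, so $[n/\alpha]<L_m$. The only real obstacle is getting the exact identity with the correct sign and exponent; after that, both bounds reduce to crude numerical estimates valid for every $m\ge2$.
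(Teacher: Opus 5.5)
Your proposal is correct and follows essentially the paper's argument. The paper obtains the same identity $5F_m/\alpha=L_{m-1}+(-1)^m/\ph^m$ by writing $5F_m=L_{m+1}+L_{m-1}$ and applying (\ref{eq:d2}) to each term (using $1+\ph^2=\alpha$), whereas you derive it from Binet's formulas and $\alpha=\sqrt5\,\ph$; the two bounds then follow exactly as in Lemma~\ref{lem:0a}, and your explicit checks ($1/\ph^m\le 1/\ph^2<1/2$ and $1/\ph^{3}-5/\alpha<-1/2$) supply the details the paper leaves as ``analogous.''
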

\begin{proof} Notice that
  \begin{align*}
      \frac{5F_m}{\alpha} &= \frac{L_{m+1}+L_{m-1}}{\alpha}\\
      &=F_{m}+\frac{(-1)^{m}}{\ph^{m}\alpha}+F_{m-2}+\frac{(-1)^{m-2}}{\ph^{m-2}\alpha}\refby{eq:d2}\\
      &=L_{m-1}+\frac{(-1)^m}{\ph^m}.
  \end{align*}
  Using this identity and proceeding in analogy to the proofs of Lemmas \ref{lem:0a} and \ref{lem:0a1}, the result follows.
\end{proof}
\begin{lem}\label{lem:1e}
  Let $m\geq 7$ and $b$ be integers with $L_{m-1}\leq b < L_{m}$. If $[b\alpha]\equiv 0\pmod{5}$ and
  \[\|b\alpha\|^2\leq \frac{4\sqrt{5}}{\ph^{2m-3}},\]
  then $b=L_{m-1}$.
\end{lem}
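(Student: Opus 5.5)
The plan is to follow the proof of Lemma~\ref{lem:1a}: bound the smallest index in the Ostrowski representation of $b$, list the finitely many admissible representations in $[L_{m-1},L_m)$, and eliminate all but $b=L_{m-1}$ using the congruence $[b\alpha]\equiv 0\pmod 5$.

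\emph{Step 1 (smallest index).} Write $b=\sum_{k=r}^{R}c_kF_k$ as in Lemma~\ref{lem:1}. Lemma~\ref{lem:2} gives $\|b\alpha\|>1/\ph^{r+1}$. Combined with the hypothesis this yields
\[
\frac{1}{\ph^{2r+2}}<\|b\alpha\|^2\le\frac{4\sqrt5}{\ph^{2m-3}},
\qquad\text{that is,}\qquad
\ph^{2m-2r-5}<4\sqrt5 .
\]
Numerically $\ph^{6}\approx 17.94<4\sqrt5\approx 8.94\cdot 2$, while $\ph^{7}\approx 29.03$. More precisely, $4\sqrt5\approx 8.94$ lies strictly between $\ph^{4}\approx 6.85$ and $\ph^{5}\approx 11.09$. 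Hence $2m-2r-5\le 4$, so $r\ge m-4$. Since $m\ge 7$, this gives $r\ge 3$, so Lemma~\ref{lem:1} applies and $[b\alpha]$ is obtained by replacing each $F_k$ with $L_{k+1}$.

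\emph{Step 2 (enumeration).} We have $L_{m-1}=F_m+F_{m-2}$ and $L_m=F_{m+1}+F_{m-1}$. Listing sums of non-consecutive Fibonacci numbers with all indices $\ge m-4$ that lie in $[L_{m-1},L_m)$ gives
\[
F_m+F_{m-2},\quad F_m+F_{m-2}+F_{m-4},\quad F_{m+1},\quad F_{m+1}+F_{m-4},\quad F_{m+1}+F_{m-3},\quad F_{m+1}+F_{m-2},\quad F_{m+1}+F_{m-2}+F_{m-4}.
\]
Any representation with leading term $F_m$ and without $F_{m-2}$ falls below $L_{m-1}$. Any representation whose leading index is at least $m+2$, or equals $m+1$ together with $F_{m-1}$, reaches at least $L_m$.

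\emph{Step 3 (congruence).} The corresponding values of $[b\alpha]$ are
\[
L_{m+1}+L_{m-1},\quad L_{m+1}+L_{m-1}+L_{m-3},\quad L_{m+2},\quad L_{m+2}+L_{m-3},\quad L_{m+2}+L_{m-2},\quad L_{m+2}+L_{m-1},\quad L_{m+2}+L_{m-1}+L_{m-3}.
\]
Reduce these modulo $5$ using two facts. First, $L_{i+1}\equiv 3L_i\pmod 5$, which can be read off the period-$4$ table $2,1,3,4$. Second, $L_i+L_j\equiv 0$ iff $i-j\equiv 2\pmod 4$. Because shifting all indices multiplies every residue by the unit $3$, whether a sum vanishes mod $5$ is independent of $m$. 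So it is enough to check one residue class of $m$ modulo $4$, or simply cite the index-difference rule.

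Applying the rule: the first value is $5F_m\equiv 0$. The second is $\equiv L_{m-3}\not\equiv 0$. A single term $L_{m+2}$ is never $0$. The pairs in the fourth, fifth and sixth values have index differences $5,4,3$, none $\equiv 2\pmod 4$, so they are nonzero. The last value needs a direct residue check: with $m-3\equiv 0\pmod 4$ the residues are $1+3+2\equiv 1$. Therefore only $b=F_m+F_{m-2}=L_{m-1}$ survives, which proves the lemma. Unlike Lemma~\ref{lem:1a}, no second candidate has to be removed by a size estimate.

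\emph{Main obstacle.} The delicate parts are the threshold computation in Step~1, which must produce exactly $r\ge m-4$ (a weaker bound would enlarge the list in Step~2), and making sure the enumeration in Step~2 is complete. I will double-check the list by comparing each candidate's magnitude against $L_{m-1}$ and $L_m$. If the threshold turned out to give only $r\ge m-5$, I would extend the enumeration by the terms $F_{m-5}$. For any extra candidate passing the congruence test I would then compute $\|b\alpha\|$ exactly via (\ref{eq:d2}) and compare it with $2\cdot 5^{1/4}/\ph^{m-3/2}$, as was done for $F_m+F_{m-4}$ in Lemma~\ref{lem:1a}.
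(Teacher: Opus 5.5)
Your proof is correct and follows the paper's argument: Lemma~\ref{lem:2} gives $r\ge m-4$, you enumerate the seven Ostrowski candidates in $[L_{m-1},L_m)$, and the congruence $[b\alpha]\equiv 0\pmod 5$ eliminates all of them except $F_m+F_{m-2}$ — this is exactly the detail the paper leaves as ``analogous''. The only blemish is the stray sentence comparing $\ph^6$ with $2\cdot 4\sqrt5$, which is false as written ($\ph^6\approx 17.94>17.89$); it plays no role, because the comparison you actually use is the correct one, $\ph^4<4\sqrt5<\ph^5$.
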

\begin{proof} Let $b=\sum_{k=r}^{R}c_{k}F_k$ be the Ostrowski representation of $b$. Applying Lemma~\ref{lem:2}, $4\sqrt{5}/\ph^{2m-3}\geq \|b\alpha\|^2\geq 1/\ph^{2r+2}$, from which follows $r\geq m-4$. Noting that $L_{m-1}=F_{m}+F_{m-2}$, and proceeding analogously to the proof of Lemmas \ref{lem:1a} and \ref{lem:2a}, we find that, for $m\geq 7$, the only value of $b$ satisfying $\|b\alpha\|^2\leq 4\sqrt{5}/\ph^{2m-3}$ and $[b\alpha]\equiv 0\pmod{5}$ simultaneously is $b=F_{m}+F_{m-2}$.
\end{proof}


\begin{prop}\label{prop:1f} Let $m\geq 5$ and $n=5F_{m}$. If \[s=a_1\z+a_2\z^2+a_3\z^3+a_4\z^4\in\Sigma_5^0(n),\]
 and $a_2+a_3=L_{m-1}$, then
  \[|s|^2\geq \frac{\sqrt{5}}{\ph^{2m-1}}.\]
 Furthermore, $s$ satisfies the above bound with equality if and only if $s$ or $\overline{s}$ is equal to
  \[2F_m\z + F_{m-2}\z^2 +F_m\z^3+F_{m+1}\z^4.\]
\end{prop}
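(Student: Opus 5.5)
The plan is to follow the pattern of Proposition~\ref{prop:1b}. Since $b=a_2+a_3=L_{m-1}$ is fixed, the real part is determined exactly. The whole problem then reduces to a lower bound for $\|x\ph\|$, subject to the size and parity constraints inherited from the weight.

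\emph{Real part.} The computation in the proof of Lemma~\ref{lem:1d1} gives $5F_m/\alpha=L_{m-1}+(-1)^m/\ph^m$, so $|5F_m-L_{m-1}\alpha|=\alpha/\ph^m$. Using $\alpha=\ph+2=\sqrt5\,\ph$, Lemma~\ref{lem:bnxy} yields
\[|\re(s)|=\frac{1}{2\ph}\cdot\frac{\alpha}{\ph^m}=\frac{\sqrt5}{2\ph^{m}},\qquad \re(s)^2=\frac{5}{4\ph^{2m}}.\]
This agrees with the real part of the extremal sum computed in Lemma~\ref{lem:ub2}. Hence the claimed inequality $|s|^2\ge \sqrt5/\ph^{2m-1}$ is equivalent to $\im(s)^2\ge \im(s_0)^2$, where $s_0=2F_m\z+F_{m-2}\z^2+F_m\z^3+F_{m+1}\z^4$. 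By Lemma~\ref{lem:bnxy} this becomes
\[|x\ph-y|\ \ge\ |F_{m-2}\ph-F_{m-1}|=\frac{1}{\ph^{m-2}},\]
where $x=a_1-a_4$ and $y=a_3-a_2$. The values $x=F_{m-2}$, $y=F_{m-1}$ are those read off from $s_0$. I will check the constant here by the same identity (\ref{eq:ratio}) used in Lemma~\ref{lem:ub2}, rather than by recomputing it.

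\emph{The Diophantine step.} We have the constraints $|y|\le a_2+a_3=L_{m-1}$ and $y\equiv L_{m-1}\pmod 2$. We also have $a_1+a_4=5F_m-L_{m-1}$, so $x\equiv 5F_m-L_{m-1}\pmod 2$. Suppose $|x\ph-y|<1/\ph^{m-2}$, or equality holds. Since this is at most $1/\ph^3$, Lemma~\ref{lem:bnxy} gives $y=[x\ph]$, so the assumption says $\|x\ph\|\le 1/\ph^{m-2}$.
\begin{itemize}
\item If $x=0$, then $y\ne0$ because $|s|$ is small, so $|x\ph-y|\ge1$, which is a contradiction. Hence, after conjugating, we may take $x>0$.
\item Write the Ostrowski representation $x=\sum_{k=r}^R c_kF_k$. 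Lemma~\ref{lem:2} gives $1/\ph^{r+1}<1/\ph^{m-2}$, so $r\ge m-2$, and in particular $x\ge F_{m-2}$.
\item On the other side, $y=[x\ph]\le L_{m-1}=F_m+F_{m-2}$ gives $x\lesssim L_{m-1}/\ph$, which is about $F_{m-1}+F_{m-3}$.
\end{itemize}
So $x$ lies in a short list: sums of non-consecutive Fibonacci numbers with indices at least $m-2$ and total at most about $F_{m-1}+F_{m-3}$, namely $x\in\{F_{m-2},F_{m-1}\}$. The value $F_{m-1}+F_{m-3}$ is excluded because it needs index $m-3<r$. For each candidate, Lemma~\ref{lem:1} gives $[x\ph]=F_{x\text{-index}+1}$. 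We then test the parity conditions modulo $2$ (Fibonacci parities have period $3$) and the exact value $\|x\ph\|=1/\ph^{k+1}$ for $x=F_k$, using (\ref{eq:d2}).

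The expected outcome is as follows. The candidate $x=F_{m-1}$, $y=F_m$ gives $\|x\ph\|=1/\ph^{m}<1/\ph^{m-2}$, but it must be excluded by the constraint $|y|\le L_{m-1}$ together with parity. The consistency of $y=F_m\equiv L_{m-1}$ with the parity of $x$ should fail. If parity alone does not kill it, I will use the finer constraint that $a_2,a_3\ge0$ and $a_2+a_3=L_{m-1}$ with $a_3-a_2=F_m$ force $a_2=F_{m-2}/2$, which is incompatible unless the parities match. The remaining candidate $x=F_{m-2}$, $y=F_{m-1}$ achieves equality.

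\emph{Reconstruction and main obstacle.} From $x=F_{m-2}$, $y=F_{m-1}$ and the two sums $a_1+a_4$ and $a_2+a_3$, solve for $a_1=2F_m$, $a_2=F_{m-2}$, $a_3=F_m$, $a_4=F_{m+1}$; this is $s_0$ or its conjugate. The main obstacle is the bookkeeping in the Diophantine step: getting the exact constant in the threshold right, and ruling out the "too good" approximation $x=F_{m-1}$ via the interplay of parity and the bound $|y|\le L_{m-1}$. I would carry out this step by explicit case analysis modulo $3$ on $m$.
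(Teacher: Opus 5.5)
Your route is the paper's route. The condition $b=L_{m-1}$ pins down $\re(s)^2=5/(4\ph^{2m})$, and the bound then becomes $\|x\ph\|\le 1/\ph^{m-2}$; this is equivalent to the paper's $\|x\ph\|^2\le 1/\ph^{2m-4}$, so your constant is right. Lemma~\ref{lem:2} gives $x\ge F_{m-2}$, the constraint $y\le L_{m-1}$ caps $x$, and parity decides between $F_{m-2}$ and $F_{m-1}$.

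\textbf{The $x=0$ step is wrong as written.} You say $y\neq 0$ ``because $|s|$ is small'', but smallness points the other way. The pair $(x,y)=(0,0)$ would give $|s|^2=5/(4\ph^{2m})<\sqrt5/\ph^{2m-1}$. So it is exactly the configuration that would violate the bound, and it has to be excluded arithmetically. The correct argument uses the congruences you already recorded: $x\equiv a_1+a_4=L_{m+1}$ and $y\equiv a_2+a_3=L_{m-1}\pmod 2$. Since $L_k$ is even iff $3\mid k$, $L_{m+1}$ and $L_{m-1}$ are never both even. Hence $x=0$ forces $y$ odd, so $|x\ph-y|\ge 1$. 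Avoid the paper's phrasing $L_{m+1}\not\equiv L_{m-1}\pmod 2$: it is false when $3\mid m$ (e.g.\ $L_5=11$, $L_7=29$). ``Not both even'' is what is true, and it is all that is needed.

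\textbf{The case $x=F_{m-1}$, which you leave hedged, does fail by parity, but only if you use both coordinates.} Here $y=F_m$ gives $2a_2=L_{m-1}-F_m=F_{m-2}$, and $x=F_{m-1}$ gives $2a_4=L_{m+1}-F_{m-1}=3F_m$. So $F_{m-2}$ and $F_m$ would both be even, which is impossible. Your fallback via $a_2=F_{m-2}/2$ alone does not suffice when $m\equiv 2\pmod 3$. For $m=5$, $a_2=1$ and $a_3=6$ are legitimate; what fails is $a_1-a_4=3$ with $a_1+a_4=18$.

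\textbf{Two smaller fixes.}
First, $\|F_k\ph\|=|D_{k-1}|=1/\ph^{k}$, not $1/\ph^{k+1}$. Your version would give $\|F_{m-2}\ph\|<1/\ph^{m-2}$ and contradict equality at $s_0$; your earlier display $|F_{m-2}\ph-F_{m-1}|=1/\ph^{m-2}$ is the correct one.
Second, state the cap on $x$ exactly. If $x\ge F_m$, then $y=[x\ph]\ge F_{m+1}>L_{m-1}$, so $x<F_m$. Together with $r\ge m-2$, this leaves $x\in\{F_{m-2},F_{m-1}\}$.
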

\begin{proof}
We can assume
  \[|s|^2\leq \frac{\sqrt{5}}{\ph^{2m-1}}\leq \frac{\sqrt{5}}{\ph^9}<\frac{1}{\ph^4}.\]
  Let $b:=a_2+a_3$. Applying Lemma~\ref{lem:bnxy}, and using the fact that $n=L_{m+1}+L_{m-1}$ and $b=L_{m-1}=F_{m}+F_{m-2}$, we have
  \begin{align*}
    |\re(s)|^2 &= \frac{1}{4\ph^2}|n-b\alpha|^2=\frac{5}{4\ph^{2m}},
  \end{align*}
  and
  \[|\im(s)|^2=\frac{4-\ph^2}{4}|x\ph-y|^2,\]
  where $x=a_1-a_4$, and $y=a_3-a_2$ satisfy $y=[x\ph]$ and $x=[y/\ph]$. Noting that $a_1+a_4=5F_{m}-b=L_{m+1}$, we can express the coefficients of $s$ in terms of $x$ and $y$ as
  \[(a_1,a_2,a_3,a_4)=\frac{1}{2}(L_{m+1}+x,L_{m-1}-y,L_{m-1}+y,L_{m+1}-x).\]
   Also, from $L_{m+1}\not\equiv L_{m-1}\pmod{2}$ we find that $x=0$ implies $y\neq 0$. If $x=0$, then for $m\geq 3$,
  \begin{align*}
    |s|^2\geq |\im(s)|^2\geq\frac{4-\ph^2}{4}>\frac{\sqrt{5}}{\ph^{2m-1}},
  \end{align*}
 Hence, without loss of generality, we may assume that $x>0$. Observe that
 \[\frac{\sqrt{5}}{\ph^{2m-1}}\geq |s|^2\geq \frac{5}{4\ph^{2m}}+\frac{4-\ph^2}{4}\|x\ph\|^2.\]
 So, it follows that $\|x\ph\|^2\leq 1/\ph^{2m-4}$. Let $x=\sum_{k=r}^R c_{k}F_{k}$ be the Ostrowski representation of $x$. Then, Lemma~\ref{lem:2} implies $\|x\ph\|^2>1/\ph^{2r+2}$, so $r\geq m-2$ and, in particular, $x\geq F_{m-2}$. From $0\leq 2a_2=L_{m-1}-y$, it follows that $y\leq L_{m-1}=F_{m}+F_{m-2}$, and then, for $m\geq 5$, $x=[y/\ph]\leq F_{m-1}+F_{m-3}$. Hence, the only valid Ostrowski representations of $x$ are $x= F_{m-2}$, or $x=F_{m-1}$. 
  
   Case 1:  $x=F_{m-1}$. Then  $y=F_{m}$, $2a_2=L_{m-1}-F_{m}=F_{m-2}$, and $2a_4=L_{m+1}-F_{m-1}=3F_{m}$. But this implies $F_{m-2}\equiv F_{m}\equiv 0\pmod{2}$, a contradiction.
  
  Case 2: $x=F_{m-2}$. Then $y=F_{m-1}$, and  $(a_1,a_2,a_3,a_4)=(2F_{m},F_{m-2},F_m,F_{m+1})$. The corresponding sum $s$ satisfies $|s|^2=\sqrt{5}/\ph^{2m-1}$, as shown in the proof of Lemma~\ref{lem:ub2}.
  %
\end{proof}

\begin{proof}[Proof of Theorem~\ref{thm:2a}]
  In view of Proposition~\ref{prop:red1} it suffices to show that for integers $n=5k$ satisfying $5F_{m}\leq n<5F_{m+1}$, for some $m\geq 2$, and any weight $n$ sum
  \[s=a_1\z+a_2\z^2+a_3\z^3+a_4\z^4\in\Sigma_5^0(n),\]
  we have
  \[|s|^2\geq \frac{\sqrt{5}}{\ph^{2m-1}}.\]
  For $n< 5F_{7}=65$ the claim can be verified computationally. Now suppose $n=5k\geq 65$, so that there is a unique integer $m\geq 7$ such that $5F_{m}\leq n<5F_{m+1}$, and thus $\nu(k)=m$. We may assume without loss of generality that
  \[|s|^2\leq\frac{\sqrt{5}}{\ph^{2m-1}}\leq \frac{\sqrt{5}}{\ph^{13}}<\frac{1}{\ph^4}.\]
  Hence, Lemma~\ref{lem:bnxy} implies that $n=[b\alpha]$ and $b=[n/\alpha]$, where $b=a_2+a_3$; and that $y=[x\ph]$ and $x=[y/\ph]$ where $x=a_1-a_4$ and $y=a_3-a_2$.
  The conditions $n\equiv 0\pmod{5}$ and $5F_{m}\leq n<5F_{m+1}$ imply $n\leq5F_{m+1}-5$, so, by Lemma~\ref{lem:1d1}, $L_{m-1}\leq b <L_{m}$. Since
  \[\frac{\sqrt{5}}{\ph^{2m-1}}\geq |s|^2\geq |\re(s)|^2=\frac{1}{4\ph^2}\|b\alpha\|^2,\]
  we find $\|b\alpha\|^2\leq 4\sqrt{5}/\ph^{2m-3}$, and $[b\alpha]=n\equiv 0\pmod{5}$. By Lemma~\ref{lem:1e}, we obtain $b=L_{m-1}=F_{m}+F_{m-2}$, and hence $n=L_{m+1}+L_{m-1}=5F_m$. Then, applying Proposition~\ref{prop:1f}, $|s|^2\geq \sqrt{5}/\ph^{2m-1}$.
\end{proof}

Similarly to the discussion at the end of Section \ref{sec:thm1}, one can show that Proposition~\ref{prop:1f} characterizes the set of minimal sums at orders $n\equiv 0\pmod{5}$. Now, for $n\geq 65$, using the monotonicity of $\nu$, this proposition implies that, whenever $n\equiv 0\pmod{5}$ and $n\in [5F_m, 5F_{m+1})$, if $s\in \Sigma_5(n)$ satisfies $|s|=\sigma_5(n)$, then $s$ belongs to the set $\mathcal{M}''\cup\overline{\mathcal{M}''},$ where
\[\mathcal{M}''=\{\zeta^h(2F_m\z + F_{m-2}\z^2 +F_m\z^3+F_{m+1}\z^4)\mid 0\leq h\leq 4\}.\]
 An exhaustive computation shows that the same characterization of minimal sums above also holds for $5\leq n \leq 60$, with $n\equiv 0\pmod{5}$.

\subsection*{Acknowledgments}
AM was supported by the Japanese Society for the Promotion of Science (JSPS) KAKENHI Grant Number 25K07095. GNP was supported by the JSPS Postdoctoral fellowship, and by the JSPS KAKENHI Grant Number 24KF0176.

\printbibliography

\end{document}